\newenvironment{draft}{\color{red}}{}
\newtheorem{thm}{Theorem}[section]
\newtheorem{cor}[thm]{Corollary}
\newtheorem{lem}[thm]{Lemma}
\newtheorem{prop}[thm]{Proposition}
\theoremstyle{definition}
\newtheorem{defn}[thm]{Definition}
\newtheorem{rem}[thm]{Remark}
\newtheorem{ex}[thm]{Example}
\newcommand{\N}{{\mathbb N}}
\newcommand{\Z}{{\mathbb Z}}
\newcommand{\R}{{\mathbb R}}
\newcommand{\dom}{\mathop{\mathrm{dom}}}
\newcommand{\Lip}{\mathop{\mathrm{Lip}}}
\def\bc{\begin{center}}       \def\ec{\end{center}}
\def\be{\begin{equation}}     \def\ee{\end{equation}}
\def\ba{\begin{array}}        \def\ea{\end{array}}
\def\bea{\begin{eqnarray}}            \def\eea{\end{eqnarray}}
\def\beaa{\begin{eqnarray*}}  \def\eeaa{\end{eqnarray*}}
\def\bd{\begin{draft} }      \def\ed{\end{draft}}
\def\p{\partial}
\def\pz{p_0^+}
\def\pf{p_0^-}
\def\mL{\mathcal {F}_{\bar{x}}}
\def\F1{\mathcal{F}}
\def\a{\alpha_t}
\def\pt{p_t^-}
\def\sig{\sigma}
\def\ep{\epsilon}
\def\O{\mathcal {O}}
\def\pl{p_-}
\def\pr{p_+}
\def\x{x_0}
\def\y{y_0}
\def\z{z_0}
\def\sig{\sigma}
\def\u{u_i}
\def\p{\partial}
\def\ep{\epsilon}
\def\vp{\varphi}
\begin{document}
\title[Front tracking and iterated minmax]{Front tracking and iterated minmax for Hamilton-Jacobi equation in one space variable}
\author{Qiaoling WEI}
\begin{address}{WEI Qiaoling, Universit\'{e} Paris 7, UFR de Math\'ematiques, B\^atiment Sophie Germain
5, rue Thomas Mann, 75205 Paris Cedex 13
FRANCE} \email{weiqiaoling@math.jussieu.fr}
\end{address}

%\date{\today}
\maketitle
{ %\footnotesize \baselineskip=0.72 \normalbaselineskip
\begin{abstract} 
In \cite{WQ2}, the viscosity solution of the Hamilton-Jacobi equation was constructed by an ``iterated minimax'' procedure.
Using Dafermos' front tracking method, we give another proof of this construction in the case of Hamilton-Jacobi equations in one space dimension.
This allows us to get a better understanding in this case of the singularities of the viscosity solution.
\end{abstract}

\section{Introduction}

Consider the Cauchy problem for the Hamilton-Jacobi equations in one space variable
\[\leqno(\hbox{HJ})\quad \left\{
     \begin{array}{ll}
       \p_t u+ H(\p_x u)=0,\quad  \\
       u(0,x)=v(x), \quad x\in \R
     \end{array}
   \right.\]
There are in general no global classical $C^1$ solutions  due to the crossing of characteristics. Different attempts to find proper ``weak solutions'' exist, such as Oleinik's and Kru\"{z}kov's entropy conditions, and explicit solutions constructed by Hopf formula \cite{Hopf} for convex Hamiltonians or initial functions. 

The lack of uniqueness of weak solutions led  M. G. Crandall, L. C. Evans, and P. L. Lions to introduce, in the 1980's, the notion of ``viscosity solution'' \cite{PL,GL}. Viscosity solutions need not be differentiable anywhere, which makes their relationship with the classical crossing of characteristics unclear. However, they possess very general existence, uniqueness and stability properties and, in a large class of ``good'' cases, they coincide with the weak solutions introduced before.

There is also a geometric method of constructing weak solutions for non-convex Hamiltonians, proposed by M. Chaperon, called ``minmax solution'', which, may not necessarily be the viscosity solution. In \cite{WQ2}, the author has shown that, for general Hamilton-Jacobi equations, a limiting process of ``iterated minmax'' instead can lead to the viscosity solution.  In one space variable, this fact can be explained more geometrically by investigating the wave front of the geometric solution. In this paper, we will introduce  the method of ``front tracking'', which was first proposed by C.DAFERMOS (\cite{DA}), to reveal the relation of iterated minmax with viscosity solution, and whence give an alternative perspective to the convergence of iterated minmax to viscosity solutions. As an application, in the last section, we will use the limiting process of iterated minmax to describe the propagation of singularities of viscosity solutions.

\section{Preliminary on minmax solution}
We will briefly introduce the geometric frame work for Hamilton-Jacobi equations. We assume at first that $H$ is $C^2$ and $v$ is $C^2$ with bounded Lipschitz constant, denoted by $\Lip(v)$. The characteristics, or, the Hamiltonian flow of $H$, are given by 
\[\vp^t(x_0,y_0)=(x_0+ t\nabla H(y_0),y_0),\]
and the {\it geometric solution} of the (HJ) equation is defined as
\[L=\bigcup_{t}\{t\}\times \vp^t(dv)\subset \R\times T^*\R\]
where $dv:=\{(x,dv(x)),x\in\R\}$ is the 1-graph of $v$.
Indeed, we can identify $L$ with $\tilde{L}=i(L)\subset T^*(\R\times \R)$ by the  map \[i: \R\times T^*\R\to T^*(\R\times \R),\quad (t,x,p)\mapsto (t,x,-H(p),p)\] A $C^1$ solution of the (HJ) equation, if it exists, is a function $u(t,x)$ whose 1-graph $\{(t,x,\p_t u(t,x),\p_x u(t,x))\}$ coincides with $\tilde{L}$, or equivalently, $(x,\p_x u(t,x))=\vp^t(dv)=:L_t$ for all $t$. In general, due to the crossing of characteristics\footnote{It means precisely the crossing under the projection $\pi: \R\times T^*\R\to \R\times \R$, $(t,x,p)\mapsto (t,x)$.}, the geometric solution may be multi-valued, which prevents the existence of such a $C^1$ solution.

The geometric solution can be generated by a $C^2$ family of functions $S_t: \R\times \R^2\to \R$
\be \label{gf} S_t(x,x_0,y_0):=v(x_0)-tH(y_0)+(x-x_0)y_0\ee
in the sense that, for all $t$,
\[L_t=\vp^t(dv)=\{(x,\p_x S_t(x,x_0,y_0))|\p_{(x_0,y_0)}S_t(x,x_0,y_0)=0\}\]
The function $S(t,x,\cdot):=S_t(x,\cdot): \R\times \R\times \R^2\to \R$ is called a {\it generating family} of the geometric solution $L$.

Note that $L$ is contained in the subset $\Pi_p:=\{|p|\leq \Lip(v)\}$. We can assume that $H$ vanishes outside a neighbourhood of $\Pi_p$ without changing $L$. If $Q$ is the quadratic form in $\R^2$ defined by $Q(x_0,y_0)=-x_0y_0$, then $|\nabla S(t,x,\cdot)-\nabla Q|$ is uniformly bounded on compact subsets of $(t,x)$. We say that $S$ is a generating family {\it quadratic at infinity}. 
Indeed, for each compact subset of $(t,x)$, $S(t,x,\cdot)$ can be made exactly equal to the quadratic form $Q$ outside a compact set by means of a fiberwise diffeomorphism.

For any $C^2$ function $f:X:=\R^k\to \R$ quadratic at infinity, such that $f=Q$ outside a compact set, let $f^c:=\{\eta|f(\eta)\leq c\}$ denote the sub-level set of $f$. Note that for $c$ large enough, the homotopy types of $f^c$,
$f^{-c}$ do not depend on $c$; hence we may denote them as $f^{\infty}$
and $f^{-\infty}$. Suppose the quadratic form $Q$ has Morse index
$\lambda$, taking the coefficient field $\Z_2$, the only non-zero homology groups are $H_{\lambda}(f^{\infty},f^{-\infty};\Z_2)$ and 
$H_{n-{\lambda}}(X\setminus f^{-\infty},X\setminus f^{\infty};\Z_2)$. 

Let $\Xi$ and $\Delta$ be generators of $H_{\lambda}(f^{\infty},f^{-\infty};\Z_2)$ and $H_{n-{\lambda}}(X\setminus f^{-\infty},X\setminus f^{\infty};\Z_2)$ respectively.
\begin{defn} The minmax and maxmin of $f$ are defined by
\beaa \inf\max f&:=&\inf_{[\sigma]=\Xi}\max_{\eta\in |\sigma|}f(\eta)\\
      \sup\min f&:=&\sup_{[\sigma]=\Delta}\min_{\eta\in |\sigma|}f(\eta)\eeaa
where $\sigma$ is a relative cycle and $|\sigma|$ denotes its support. 
We call $\sigma$ a descending (resp.
ascending) cycle if $[\sigma]=\Xi$ (resp. $[\sigma]=\Delta$).
\end{defn}

The minmax and maxmin defined in such a way are equal, see \cite{WQ}, and they are a critical value of $f$.

\begin{prop} Assuming that $v$ is a globally Lipschitz $C^2$ function and $H$ is $C^2$, the minmax 
\be\label{minmax} R_0^t v(x):=\inf\max S_t(x,x_0,y_0)=\inf\max_{(x_0,y_0)}\Big(v(x_0)-tH(y_0)+(x-\x)\y\Big),\ee is a weak solution of the (HJ) equation, i.e., it verifies the equation almost everywhere.
\end{prop}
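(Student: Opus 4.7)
The plan is to show that $R_0^t v$ is locally Lipschitz in $(t,x)$, hence differentiable almost everywhere by Rademacher's theorem, and that at every such differentiability point the Hamilton-Jacobi equation holds pointwise. The argument rests on the envelope structure of the minmax construction and proceeds in three steps.

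First, I would identify the critical points of $S_t(x,\cdot)$. Differentiating \eqref{gf} gives
\[
\partial_{x_0} S_t = v'(x_0) - y_0, \qquad \partial_{y_0} S_t = (x-x_0) - tH'(y_0),
\]
so $(\bar x_0, \bar y_0)$ is critical if and only if $\bar y_0 = v'(\bar x_0)$ and $x = \bar x_0 + tH'(\bar y_0)$, i.e.\ iff $(x, \bar y_0) = \varphi^t(\bar x_0, v'(\bar x_0)) \in L_t$. Hence critical points of $S_t(x,\cdot)$ are in bijection with the preimages of $x$ under $\pi \colon L_t \to \R$, and at any such critical point
\[
\partial_x S_t(x, \bar x_0, \bar y_0) = \bar y_0, \qquad \partial_t S_t(x, \bar x_0, \bar y_0) = -H(\bar y_0).
\]

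Second, I would check local Lipschitz continuity of $(t,x)\mapsto R_0^t v(x)$. This uses the general stability property of minmax: for two $C^2$ functions quadratic at infinity with the same $Q$ outside a common compact set, $|\inf\max f - \inf\max g|$ is controlled by $\|f-g\|_\infty$ on a fixed compact subset containing all relevant critical points. Since $H$ may be taken to vanish outside a neighborhood of $\Pi_p$, both $|H|$ and the relevant range of $(x_0,y_0)$ are bounded, giving
\[
|R_0^t v(x) - R_0^{t'} v(x')| \leq C\bigl(|t-t'| + |x-x'|\bigr)
\]
with $C$ depending only on $\Lip(v)$ and $\sup|H|$. Rademacher's theorem then provides differentiability at Lebesgue-almost every $(t,x)\in(0,\infty)\times\R$.

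Third, at any point $(t_0,x_0)$ of differentiability, let $(\bar x_0,\bar y_0)$ be a critical point of $S_{t_0}(x_0,\cdot)$ realizing the minmax, $R_0^{t_0} v(x_0) = S_{t_0}(x_0,\bar x_0,\bar y_0)$. An envelope-theorem argument, obtained by applying the stability inequality of the previous step while varying $(t,x)$ and comparing against descending/ascending cycles adapted to $(\bar x_0,\bar y_0)$, shows
\[
R_0^t v(x) = S_t(x,\bar x_0,\bar y_0) + o\bigl(|t-t_0|+|x-x_0|\bigr).
\]
Differentiating and using step one gives $\partial_x R_0^t v(t_0,x_0) = \bar y_0$ and $\partial_t R_0^t v(t_0,x_0) = -H(\bar y_0)$, so $\partial_t R_0^t v + H(\partial_x R_0^t v) = 0$ at $(t_0,x_0)$.

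The main obstacle I expect is the envelope identity of the third step. The minmax is typically attained at several critical points with the same critical value (at shock points, where sheets of $L_t$ project to the same $x$ with distinct $\bar y_0$), and the subtlety is that differentiability of the Lipschitz function $R_0^t v$ forces the $\bar y_0$ extracted from \emph{any} minmax critical point to be the same. This consistency follows from squeezing the upper and lower envelope estimates together, but is what makes the identification of the gradient of $R_0^t v$ with a single critical point's partial derivatives at differentiability points more delicate than a naive application of the envelope theorem.
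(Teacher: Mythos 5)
The paper offers no proof of this proposition: it is stated as a known fact imported from the minmax literature (Chaperon's construction; see \cite{WQ}, \cite{WQ2}), so there is no in-paper argument to compare yours against line by line, and I can only assess your proposal on its own terms. Your first two steps are correct and standard: the critical points of $S_t(x,\cdot)$ are exactly the preimages of $x$ in $L_t=\varphi^t(dv)$, with $\partial_x S=\bar y_0$ and $\partial_t S=-H(\bar y_0)$ there, and the $C^0$-stability of the minmax under perturbation of the generating family yields the Lipschitz estimates in $(t,x)$ (these are parts $(2)$--$(3)$ of Proposition \ref{contis}), so Rademacher applies.

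The gap is in your third step, and it is precisely the subtlety you flag at the end without actually resolving it. The one-sided envelope estimates for a minmax give, for every direction $w=(\tau,\nu)$,
\[\min_{\xi\in K}\,\partial_{(t,x)}S(\xi)\cdot w\;\le\; du(t_0,x_0)\cdot w\;\le\;\max_{\xi\in K}\,\partial_{(t,x)}S(\xi)\cdot w,\]
where $K$ is the set of critical points realizing the minmax value. ``Squeezing'' these two bounds only shows that $du(t_0,x_0)$ lies in the \emph{convex hull} of $\{(-H(\bar y_0),\bar y_0):(\bar x_0,\bar y_0)\in K\}$. When $K$ contains several points with distinct momenta --- exactly the shock situation you describe --- and $H$ is not affine, a convex combination $\big(-\sum\mu_i H(y^i),\sum\mu_i y^i\big)$ need not satisfy $p_t+H(p_x)=0$, so the equation does not follow. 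Your claimed expansion $R_0^t v(x)=S_t(x,\bar x_0,\bar y_0)+o(|t-t_0|+|x-x_0|)$ for a \emph{single} critical point is the whole content of the difficulty and is not delivered by the envelope theorem. The standard ways to close this gap are: (i) use Sard's theorem to show that outside a closed null set (the caustic) $S_t(x,\cdot)$ is Morse, so its critical values form locally finitely many smooth branches $c_i(t,x)$, each a classical solution; then invoke the Lebesgue density theorem: at a.e.\ point that is both a differentiability point of $u$ and a density point of the coincidence set $\{u=c_i\}$ containing it, one gets $d(u-c_i)=0$, hence $du=dc_i=(-H(\bar y_0),\bar y_0)$; or (ii) show that $S_t(x,\cdot)$ is an excellent Morse function for $(t,x)$ in a set of full measure, so the minmax is attained at a unique nondegenerate critical point varying smoothly there. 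Without one of these measure-theoretic inputs the argument is incomplete.
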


We remark that since $H$ is independent of $t$, the minmax $R_s^t$ for any $s,t$ depends only on the time difference $t-s$. The choice of the notation $R_s^t$ instead of $R^{t-s}$ is simply for preference.
\bigskip

The generating family quadratic at infinity defined by (\ref{gf}), and hence the minmax, can be extended to the Lipschitz framework, where $v$ is globally Lipschitz and $H$ locally Lipschitz, see \cite{WQ2}. In particular, when $v$ is only Lipschitz, the initial 1-graph $dv$ should be replaced by the {\it enlarged pseudograph } $\p v:=\{(x,p),p\in \p v(x)\}$, where $\p$ is Clarke's generalized derivative. 
\begin{defn} Let $f:\R^k\to \R$ be a Lipschitz function, the Clarke's generalized derivative of $f$ at $x$ is defined by
\[\p f(x):= \hbox{co}\{\lim df(x_n),\,x_n\to x,\,x_n\in \hbox{dom}(df)\}\]
where $\hbox{co}$ denotes the convex envelop. A point $x$ is a critical point of $f$ if $0\in \p f(x)$.
\end{defn}

\begin{ex} For $v(x)=|x|$, $\p v(0)=[-1,1]$, the enlarged pseudograph $\p v$ is obtained by adding a vertical segment to the pseudograph $dv:=\{(x,dv(x)),x\in \hbox{dom}(v)\}$.
\end{ex}
The function $S$ given in (\ref{gf}) is a generating family of the {\it generalized geometric solution} $L=\bigcup_t \{t\}\times \vp_H^t (\p v)$ in the sense that
\beaa L_t=\vp_H^t(\p v)&:=&\{(x,y_0)| y_0\in \p v(x_0), x=x_0+tp, p\in \p H(y_0)\}\\&=&\{(x,\p_x S_t(x,x_0,y_0))|0\in \p_{(x_0,y_0)} S_t(x,x_0,y_0)\},\eeaa
where $\vp_H^t:(x_0,y_0)\mapsto \cup_{p\in \p H(y_0)}(x_0+tp,y_0)$ is the {\it generalized Hamiltonian flow}.
We denote by  $C^{\Lip}(\R)$ the set of globally Lipschitz functions on $\R$, by $\|\p f\|$ the Lipschitz constant of $f\in C^{\Lip}(\R)$, and we denote $|f|_K=\max_{x\in K}|f(x)|$ for any compact $K\subset \R$.

We rename by $R_H^tv(x)$ the minmax function defined in(\ref{minmax}),  unless $H$ is specified.

\begin{prop} [\cite{WQ2}] \label{contis} Assume that $H:\R\to\R$ is locally Lipschitz and $v\in C^{\Lip}(\R)$, then,

$1)$ $R_H^tv(x)$ is a critical value of the Lipschitz map $(x_0,y_0)\mapsto S_t(x,x_0,y_0)$;

$2)$ The minmax function $R_H^tv(x)$ is Lipschitz and satisfies $\|\p( R_H^tv)\|\leq \|\p v\|$;

$3)$ For any $t_1,t_2\geq 0$,
\[|R_H^{t_1}v(x) -  R_H^{t_2}v(x)|\leq
|t_1-t_2||H|_{\{|p|\leq \|\p v\|\}}.\]

$4)$ Let $H^0$ and $H^1$ be two Hamiltonians, then
\[|R_{H^0}^{t}v-R_{H^1}^{t}v|_{C^0}\leq |t||H^0-H^1|_{\{|p|\leq \|\p v\|\}}.\]

$5)$ If $v^0, v^1\in C^{\Lip}(\R)$ and $K$ is a compact set in
$\R$, then there is a compact set $\tilde{K}=\{x:|x|\leq |x|_K+T\|\p H|_{\{|p|\leq C\}}\|\}$ such that \[|R_H^{t} v^0 -R_H^{t} v^1|_{K}\leq
|v^0-v^1|_{\tilde{K}}.\]
where $C=\max_i\|\p v_i\|$, $i=0,1$.
\end{prop}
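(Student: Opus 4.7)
The plan is to view items 1)--5) as an integrated package that follows from a Lipschitz version of the homological minmax theory applied to the generating family $S_t$. The key geometric input throughout is that every critical point $(x_0^*,y_0^*)$ of the map $(x_0,y_0)\mapsto S_t(x,x_0,y_0)$ automatically lies in the strip $\{|y_0|\leq \|\p v\|\}$: the critical equation $0\in\p_{x_0}S_t(x,\cdot,\cdot)$ reduces to $y_0\in\p v(x_0)$, so Clarke's inequality gives $|y_0^*|\leq\|\p v\|$. After truncating $H$ outside a neighbourhood of $\{|p|\leq\|\p v\|\}$ (which does not affect the minmax, by the above localization), the function $S_t(x,\cdot,\cdot)$ is Lipschitz on $\R^2$ and coincides with $Q(x_0,y_0)=-x_0y_0$ outside a compact set, so the homological minmax is well defined.

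For 1), I would invoke a Lipschitz deformation lemma phrased in terms of Clarke's subdifferential: if the minmax value $c$ were not a critical value of $S_t(x,\cdot,\cdot)$, a pseudo-gradient flow built from $\p S_t$ would push any minimising descending cycle below $c-\epsilon$, contradicting the infimum. For 2), I would combine 1) with an envelope-type identity: at points where $R_H^t v$ is differentiable, $\p_x(R_H^t v)(x)=y_0^*$, where $(x_0^*,y_0^*)$ is a critical point of $S_t(x,\cdot,\cdot)$ achieving the minmax; since $|y_0^*|\leq\|\p v\|$ for every such critical point, the Clarke derivative of $R_H^t v$ is bounded by $\|\p v\|$ as well.

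Items 3)--5) all follow from a single stability principle: if two Lipschitz generating families $f,g$ (quadratic at infinity with the same $Q$) satisfy $|f-g|\leq\delta$ on a region containing all their critical points, then $|\inf\max f-\inf\max g|\leq\delta$. Applied to $f=S_{t_1}(x,\cdot,\cdot)$ and $g=S_{t_2}(x,\cdot,\cdot)$, whose difference is $(t_2-t_1)H(y_0)$ with critical $|y_0^*|\leq\|\p v\|$, this yields 3); applied to $S_t^{H^0}-S_t^{H^1}=-t(H^0-H^1)(y_0)$, it yields 4); applied to $S_t^{v^0}-S_t^{v^1}=v^0(x_0)-v^1(x_0)$, it yields 5), where the enlarged compact set $\tilde K$ enters because each critical $x_0^*$ is forced to satisfy $x_0^*=x-tp$ with $p\in\p H(y_0^*)$, hence $x_0^*\in\tilde K$ whenever $x\in K$. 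The main obstacle, and the real content of the proposition, is proving the Lipschitz deformation lemma underlying 1) together with this stability principle; once both are in place, everything else is an elementary consequence of the localization of critical points to $\{|y_0|\leq\|\p v\|\}$.
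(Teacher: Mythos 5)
This proposition is imported verbatim from \cite{WQ2}; the paper under review gives no proof of it, so there is no internal argument to compare yours against, and I can only judge your proposal on its own terms and against what the cited reference does. Your outline is essentially the standard (and correct) route: localize the Clarke-critical points of $(x_0,y_0)\mapsto S_t(x,x_0,y_0)$ to the strip $\{|y_0|\leq\|\partial v\|\}$ via the critical equation $y_0\in\partial v(x_0)$ (and, for item 5, to $x_0=x-tp$ with $p\in\partial H(y_0)$, which produces exactly the enlarged set $\tilde K$), prove a Lipschitz deformation lemma to get 1), and deduce 2)--5) from a perturbation estimate for the homological minmax. You also correctly identify the deformation lemma as the real technical content.

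Two points deserve tightening. First, the ``stability principle'' as you state it --- $|f-g|\leq\delta$ on a region containing all critical points of $f$ and of $g$ implies $|\inf\max f-\inf\max g|\leq\delta$ --- is not the right formulation: the correct statement is obtained by interpolating $f_s=(1-s)f+sg$ and bounding the variation of the minmax $c(s)$ by $\sup_s\sup_{\mathrm{crit}(f_s)}|f-g|$, so one must control the critical points of \emph{every} interpolant, not just of the two endpoints. In each of your three applications this extra step does close: for 3) and 4) the $x_0$-critical equation $y_0\in\partial v(x_0)$ is independent of $s$, and for 5) the interpolated datum $(1-s)v^0+sv^1$ has Lipschitz constant at most $C=\max_i\|\partial v^i\|$, so the localization is uniform in $s$; but as written the principle you invoke would not survive scrutiny without saying this. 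Second, your derivation of 2) through the envelope identity $\partial_x(R_H^tv)(x)=y_0^*$ requires the graph-selector property of the minmax in the Lipschitz setting, which is itself nontrivial; a more economical route is to apply the very same interpolation estimate in the parameter $x$, since $S_t(x,\cdot)-S_t(x',\cdot)=(x-x')y_0$ and $|y_0|\leq\|\partial v\|$ at all critical points, giving $|R_H^tv(x)-R_H^tv(x')|\leq\|\partial v\|\,|x-x'|$ directly. With these repairs your proposal is a faithful reconstruction of the argument in \cite{WQ2}.
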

\bigskip
We recall the definition of viscosity solutions introduced by  M.G.Crandall and P.L.Lions, which is defined, in general for first order partial differential equation, \cite{GL}.
\begin{defn}  A function $u\in C^0\big((0,T)\times \R \big)$ is called a \emph{viscosity subsolution} (resp. \emph{supersolution}) of
 \[\p_t u + H(\p_x u)=0\]
 when it has the following property: for every $\psi\in C^1\big((0,T)\times \R \big)$ and every
 point $(t,x)$ at which $u-\psi$ attains a local maximum (resp. minimum), one has
 \[\p_t \psi + H(\p_x \psi)\leq 0,\quad (\hbox{resp}. \geq 0)\/.\]
 The function $u$ is  a \emph{viscosity solution} if it is both a viscosity subsolution and supersolution.
 \end{defn}

 For any convex Lipschitz funtion $f$, its convex conjugate $f^*$ is defined as
\[f^*:\R\to \bar{\R}=\R\cup\{\infty\},\quad f^*(x):=\sup_y(xy-f(y)).\]

\begin{prop} [\cite{these}]If $H\in C^2$ is convex, and $v\in C^{\Lip}(\R)$, then the minmax is reduced to a minimum: 
\[R_0^tv(x)=\min_{x_0}\big(v(x_0)-tH^*(\frac{x-x_0}{t})\big),\quad t>0.\]
\end{prop}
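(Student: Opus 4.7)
The strategy is to reduce the two-variable Morse-theoretic minmax of the generating family $S_t(x,x_0,y_0)=v(x_0)-tH(y_0)+(x-x_0)y_0$ to a one-variable classical infimum by integrating out $y_0$, using convexity of $H$.

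The core computation is a Legendre transform. For fixed $(t,x,x_0)$ with $t>0$, convexity of $H$ makes $y_0\mapsto S_t(x,x_0,y_0)$ concave, so by the definition of the convex conjugate recalled just above,
\[
\sup_{y_0}\bigl((x-x_0)y_0-tH(y_0)\bigr)=tH^{*}\!\bigl(\tfrac{x-x_0}{t}\bigr).
\]
Hence the fibrewise supremum $\tilde S_t(x,x_0):=\sup_{y_0}S_t(x,x_0,y_0)$ is precisely the Hopf--Lax integrand appearing on the right-hand side of the proposition.

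The main step is to show that this fibrewise reduction commutes with the Morse-theoretic minmax, i.e., that $R_0^t v(x)=\inf\max S_t$ equals $\inf_{x_0}\tilde S_t(x,x_0)$. My plan is to exploit strict convexity of $H$ (by approximation of $H$ if necessary), so that the fibrewise maximiser $y_0^{*}(x_0)$ solving $tH'(y_0)=x-x_0$ is unique and smooth in $x_0$; any descending cycle representing $\Xi$ in the $(x_0,y_0)$-plane can then be homotoped onto the graph $\{y_0=y_0^{*}(x_0)\}$, along which $S_t$ restricts to $\tilde S_t(x,\cdot)$ and the ``ascending'' $y_0$-direction of the ambient quadratic form $-x_0y_0$ has been integrated out. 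The residual Morse-theoretic minmax in the single variable $x_0$ is then an ordinary infimum, and the quadratic-at-infinity coercivity of $\tilde S_t(x,\cdot)$ promotes this infimum to a genuine minimum.

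The principal technical obstacle lies in reconciling this reduction with the framework of \cite{WQ2}: the auxiliary truncation of $H$ outside a neighbourhood of $\Pi_p=\{|p|\le\Lip(v)\}$ used to enforce the quadratic-at-infinity property destroys global convexity, so one must verify that the homotopies of cycles above can be arranged within the region where the Legendre identity remains applicable, for instance by checking that the critical points of $S_t$ controlling the minmax lie automatically inside $\Pi_p$ (so that truncated and untruncated $H$ give the same value). Once this is done, combining the explicit Legendre formula with the commuting reduction yields the stated identity immediately, with $\min$ rather than $\inf$ thanks to the coercivity of $\tilde S_t(x,\cdot)$.
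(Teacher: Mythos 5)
The paper itself gives no proof of this proposition (it is imported from the thesis \cite{these}); the closest in-paper argument is the Riemann-problem proposition, which is proved via Lemma \ref{rp} by exhibiting the candidate value simultaneously as the maximum along a descending cycle and the minimum along an ascending cycle. Your opening Legendre computation is correct — and in fact it yields $v(x_0)+tH^*\bigl(\tfrac{x-x_0}{t}\bigr)$, exposing a sign typo in the stated formula rather than matching it — but the central reduction step is wrong as stated. The graph $\Gamma=\{y_0=y_0^*(x_0)\}$ of the fibrewise maximiser is an \emph{ascending} cycle, not a descending one: along $\Gamma$ the generating family equals $\tilde S_t(x,\cdot)$, which tends to $+\infty$ at both ends because $H^*$ is superlinear while $v$ is only Lipschitz, so the ends of $\Gamma$ never reach $S^{-\infty}$ and no descending cycle can be homotoped onto it rel $S^{-\infty}$. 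Worse, deforming a cycle fibrewise towards the maximiser can only increase $\max S$ along it, and $\max_\Gamma S=\sup_{x_0}\tilde S_t(x,x_0)=+\infty$, not $\min_{x_0}\tilde S_t(x,x_0)$; so your reduction produces an estimate in the wrong direction, and the ``residual minmax in $x_0$'' it leaves behind is a supremum rather than an infimum. The actual content of the proof is the two-sided argument of Lemma \ref{rp}: $\Gamma$ is an ascending cycle realising $c:=\min_{x_0}\tilde S_t(x,x_0)$ as a minimum, and one must separately construct a descending cycle realising $c$ as a maximum — a path joining the two components of $Q^{-\infty}$ that threads through the fibre over the minimising $x_0$, where concavity in $y_0$ pinches the superlevel set $\{S_t>c\}$ shut. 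That construction is the heart of the matter and is absent from your proposal.

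The truncation obstacle you flag is real, but your proposed fix does not address it. If $H$ is cut off to vanish outside a neighbourhood of $\Pi_p$, then $S_t(x,x_0,\cdot)$ is asymptotically linear in $y_0$ with slope $x-x_0$, so $\sup_{y_0}S_t(x,x_0,y_0)=+\infty$ for every $x_0\neq x$ and the fibrewise reduction degenerates entirely; checking that the critical points lie in $\Pi_p$ does not repair this, since the untruncated $S$ is not quadratic at infinity and its minmax is not defined in the paper's framework. One must instead truncate convexly — replace $H$ by a convex function agreeing with $H$ on $\{|p|\le\Lip(v)\}$ and affine outside a slightly larger interval — so that concavity in $y_0$, the boundedness of $\nabla S-\nabla Q$, and the finiteness of the fibrewise supremum hold simultaneously, and then verify that this modification changes neither side of the identity. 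Alternatively, one can follow the route the paper actually takes for its analogous results: prove the identity for piecewise linear data, where Lemma \ref{rp} is applied by explicit construction of both cycles, and pass to the limit using the continuity estimates of Proposition \ref{contis}.
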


This is one of the Hopf formulae which defines the viscosity solution of the (HJ) equation. There is another kind of Hopf formulae for convex inital functions $v\in C^{\Lip}(\R)$:
\be\label{hopf}J_0^tv(x)=(v^*+tH)^*(x)= \max_{y_0}(xy_0-(v^*(y_0)+tH(y_0))).\ee
It defines also the viscosity solution of the (HJ) equation. See \cite{HP}.

In the following, we will denote by $J_0^tv(x)$  the viscosity solution of the (HJ) equation with initial function $v$.

\begin{lem} Let $v:\R \to \R$ be a convex Lipschitz function, and $v^*:\R\to
\overline{\R}$ be its convex conjugate, then
\[y\in \p v(x)\Longleftrightarrow v^*(y)=xy-v(x).\]
\end{lem}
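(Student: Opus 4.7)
The plan is to recognize this as the classical Fenchel--Young equality characterization of the subdifferential, and to carry it out in both directions after first reconciling the two notions of subdifferential in play.

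First, I would note the compatibility of Clarke's generalized derivative with the usual convex subdifferential when $v$ is convex: for a convex Lipschitz $v:\R\to\R$, the set $\p v(x)$ defined in the paper equals the convex-analytic subdifferential $\{y:v(z)\ge v(x)+y(z-x)\ \forall z\}$. This follows from the fact that at any point of differentiability of a convex function, the derivative is a supporting slope, together with the convex-hull / limit definition of $\p v(x)$. This identification is the bridge between the definition used in the paper and the Fenchel--Young framework.

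Next, for the implication $\Longleftarrow$, assume $v^\ast(y)=xy-v(x)$. By the very definition of the conjugate, $v^\ast(y)\ge zy-v(z)$ for every $z\in\R$, so
\[xy-v(x)\ge zy-v(z)\qquad\forall z\in\R,\]
which rearranges to the subgradient inequality $v(z)\ge v(x)+y(z-x)$. By the identification above, $y\in\p v(x)$.

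For the implication $\Longrightarrow$, assume $y\in\p v(x)$. The subgradient inequality $v(z)\ge v(x)+y(z-x)$ gives $zy-v(z)\le xy-v(x)$ for all $z$, and taking the supremum over $z$ yields $v^\ast(y)\le xy-v(x)$. The reverse inequality $v^\ast(y)\ge xy-v(x)$ is immediate from the definition of $v^\ast$ by taking $z=x$. Combining the two, $v^\ast(y)=xy-v(x)$.

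The only real obstacle is the first step, namely justifying that Clarke's $\p v$ coincides with the convex subdifferential; once that is in hand, the proof is a one-line rearrangement of the Fenchel--Young inequality. I would therefore spend the bulk of the writeup on this compatibility (citing a standard reference such as Clarke's book if the paper allows) and leave the two implications as short displayed computations.
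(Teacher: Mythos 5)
Your proposal is correct and follows essentially the same route as the paper: identify Clarke's generalized derivative with the convex subdifferential, rewrite the subgradient inequality as $xy-v(x)\geq x'y-v(x')$ for all $x'$, and conclude from the definition of $v^*$ as a supremum. The paper's proof is just a more compressed version of the same argument, with the two implications handled simultaneously via the equivalence of the rearranged inequality with the supremum being attained at $x$.
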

\begin{proof} Since $v$ is convex, its generalized derivative $\p
v(x)$ is the usual subderivative,
\beaa \p v(x)&=&\{y:\forall x', f(x')\geq f(x)+y(x'-x)\}\\
&=&\{y: \forall x', xy-f(x)\geq x'y-f(x')\}.\eeaa
We conclude by the definition of the convex conjugate $v^*(y)=\max_{x'}x'y-v(x')$.
\end{proof}

\begin{prop} \label{prop255} We have the relation
\[R_0^tv(x)\leq J_0^tv(x).\]
\end{prop}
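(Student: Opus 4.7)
The plan is to exhibit a specific descending relative $1$-cycle $\sigma$ on which $S_t(x,\cdot,\cdot)\le J_0^tv(x)$; the desired bound will then follow at once from $R_0^tv(x)=\inf_\sigma\max_{|\sigma|}S_t$. The starting observation is that, for convex $v$, the Fenchel--Young identity in the preceding lemma gives the decomposition
\[
S_t(x,x_0,y_0) = \bigl(v(x_0)+v^*(y_0)-x_0y_0\bigr) + \bigl(xy_0-v^*(y_0)-tH(y_0)\bigr),
\]
where the first bracket is $\ge 0$ and vanishes precisely on the enlarged pseudograph $\p v$, while the second is $\le J_0^tv(x)$ by the Hopf formula~\eqref{hopf}. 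Hence $S_t\le J_0^tv(x)$ pointwise on $\p v$.

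I will next construct the cycle. Recall that $H$ may be assumed to vanish outside $\{|p|\le L\}$ with $L:=\|\p v\|$; set $C:=\sup_{[-L,L]}|H'|$. For $M>L$ and $R>|x|+tC$, let $a_R:=\min\p v(-R)$, $b_R:=\max\p v(R)$, and take $\sigma_{R,M}$ to be the concatenation of the vertical segment $\{-R\}\times[-M,a_R]$, the portion of $\p v$ sitting over $x_0\in[-R,R]$, and the vertical segment $\{R\}\times[b_R,M]$. Its endpoints $(-R,-M)$ and $(R,M)$ lie in the two distinct connected components of $\{S_t\le -N\}$ for $N$ large (namely Quadrant III and Quadrant I, where the model quadratic $Q(x_0,y_0)=-x_0y_0$ tends to $-\infty$), so $\sigma_{R,M}$ represents the generator $\Xi\in H_1(S_t^\infty,S_t^{-\infty};\Z_2)$ of descending cycles.

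Finally I will bound $S_t$ on the two vertical pieces. On $\{-R\}\times[-M,a_R]$, the function $S_t(x,-R,y_0)=v(-R)-tH(y_0)+(x+R)y_0$ has $y_0$-derivative $-tH'(y_0)+(x+R)$, strictly positive throughout by the choice of $R$; hence $S_t$ is increasing in $y_0$ and attains its maximum at $y_0=a_R$. Using the Fenchel equality $v(-R)+Ra_R=-v^*(a_R)$ (from $a_R\in\p v(-R)$), this maximum equals $xa_R-v^*(a_R)-tH(a_R)\le J_0^tv(x)$. The right vertical is handled symmetrically: $S_t$ is decreasing in $y_0$ and its maximum at $y_0=b_R$ equals $xb_R-v^*(b_R)-tH(b_R)\le J_0^tv(x)$. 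Combined with the bound on the pseudograph piece, this gives $\max_{|\sigma_{R,M}|}S_t\le J_0^tv(x)$, which would complete the proof.

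The most delicate point will be verifying that $\sigma_{R,M}$ truly represents the class $\Xi$ rather than a null-homologous class; this reduces to the identification of $\{S_t\le-N\}$ with $\{Q\le-N\}$ up to homotopy and the observation that the two endpoints lie in its two distinct connected components.
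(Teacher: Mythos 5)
Your argument is correct, but it takes a genuinely different route from the paper's. The paper's proof is a two-line evaluation: it takes a point $(\bar{x}_0,\bar{y}_0)$ realizing the minmax, invokes Proposition \ref{contis}(1) to say this point is a critical point of the Lipschitz map $(x_0,y_0)\mapsto S_t(x,x_0,y_0)$, deduces from $0\in\p_{x_0}S_t$ that $\bar{y}_0\in\p v(\bar{x}_0)$, and then applies the Fenchel identity of the preceding lemma to rewrite $R_0^tv(x)=x\bar{y}_0-v^*(\bar{y}_0)-tH(\bar{y}_0)\leq J_0^tv(x)$. You instead avoid the critical-point characterization entirely and work directly from the definition of the minmax as an infimum over descending cycles, exhibiting a concrete test cycle (the enlarged pseudograph $\p v$ capped by two vertical tails) on which $S_t\leq J_0^tv(x)$; the same Fenchel identity enters, but pointwise along $\p v$ rather than at the single realizing point. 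What the paper's proof buys is brevity, at the cost of leaning on the nontrivial Lusternik--Schnirelmann-type fact that the Lipschitz minmax is attained at a generalized critical point; what yours buys is self-containedness at the level of the homological definition and a transparent geometric reason for the inequality (the descending class can be represented inside the pseudograph, where $S_t$ reduces to the integrand of the Hopf formula (\ref{hopf})). Your computations check out: the decomposition of $S_t$, the monotonicity of $S_t$ in $y_0$ on the vertical tails (which uses the paper's normalization that $H$ vanishes outside a neighbourhood of $\{|p|\leq\|\p v\|\}$, so that $H'$ is globally bounded), and the evaluation at $a_R$, $b_R$ via the Fenchel equality are all right. The one step you flag --- that $\sigma_{R,M}$ represents $\Xi$ --- is indeed the only delicate point, and it follows from the standard identification of $\{S_t\leq -N\}$ with $\{Q\leq -N\}$ (two contractible components, in quadrants I and III) together with the exact sequence of the pair, since your path joins the two components; with $\Z_2$ coefficients there is no orientation issue. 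So the proposal is sound, just longer than what the paper does with the machinery it has already set up.
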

\begin{proof}
Recall that a generating family for the minmax is given by
\beaa S_t(x,x_0,y_0)&=& v(x_0)+ xy_0- t H(y_0)-x_0y_0\\
                    &=& (v(x_0)-x_0y_0)+ xy_0-t H(y_0).\eeaa
Let $(\bar{x}_0,\bar{y}_0)$ be a point realizing the minmax:
\[R_0^tv(x)=\inf\max S_t(x,x_0,y_0)=S_t(x,\bar{x}_0,\bar{y}_0)\]
Since the minmax is a critical value for the map $(x_0,y_0)\mapsto S_t(x,x_0,y_0)$,
we have

\[0\in \p_{x_0}S_t(x,\bar{x}_0,\bar{y}_0)\Rightarrow \bar{y}_0\in \p
v(\bar{x}_0)\Leftrightarrow v^*(\bar{y_0})=\bar{x}_0\bar{y}_0-v(\bar{x}_0),\] hence
\[R_0^tv(x)= x\bar{y_0}-v^*(\bar{y}_0)- t H(\bar{y}_0)\]
and
\[R_0^t v(x)\leq \max_{y_0}(xy_0-(v^*(y_0)+ tH(y_0)))=J_0^tv(x).\]

\end{proof}

In general, for convex (concave) initial functions and non-convex Hamiltonians, the minmax function
and the viscosity solution may be different. The phenomena of rarefaction serve as simple counterexamples.

\begin{ex} Consider
\[v(x)=\begin{cases} 6x/5,\quad x\leq 0\\-2x/3,\quad x\geq 0\end{cases},
\quad H(p)=-p^3+p^2+p.\]
The initial function violates the viscosity condition, and there occurs rarefaction.
For $t>0$ small and in a small neighborhood of $x=0$,  $R_0^tv(x)$ is not
viscosity and $R_0^tv(x)<J_0^tv(x)$. See Section 4 for a precise argument.
\end{ex}

\section{Front tracking and iterated minmax}
A notable feature of the viscosity solution is that, by its uniqueness, it possesses a semi-group property:
\[J_0^tv(x)=J_s^t\circ J_0^sv(x),\quad 0\leq s<t.\]
On the contrary, a minmax $R_0^tv(x)$ does not necessarily have this feature. One can always refers to rarefactions as counterexamples. The following proposition tells us that the semi-group property acounts exactly for the difference between the minmax and the viscosity solution.

\begin{prop}[\cite{WQ2}]\label{sm} The minmax $R_0^tv(x)$ is the viscosity solution of the (HJ) equation if and only if it has the semi-group property:
\[R_0^tv(x)=R_s^t\circ R_0^sv(x),\quad 0\leq s<t.\]
\end{prop}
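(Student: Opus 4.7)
The plan is to reduce both directions to the central convergence result of \cite{WQ2}: for any partition $\pi=\{0=t_0<t_1<\cdots<t_N=t\}$ of $[0,t]$, the iterated minmax
\[R_\pi^t v := R_{t_{N-1}}^{t_N}\circ R_{t_{N-2}}^{t_{N-1}}\circ\cdots\circ R_{t_0}^{t_1} v\]
converges (uniformly on compacts) to the viscosity solution $J_0^t v$ as $|\pi|=\max_i(t_{i+1}-t_i)\to 0$. Both directions are a matter of applying this theorem in the right way.

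For the direction $(\Leftarrow)$, assume the semi-group identity $R_0^r v=R_s^r\circ R_0^s v$ for all $0\le s<r$. An immediate induction along any partition $\pi$ of $[0,t]$ gives $R_0^t v=R_\pi^t v$. Letting $|\pi|\to 0$ and invoking the convergence above yields $R_0^t v=J_0^t v$, so $R_0^t v$ is the viscosity solution.

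For the direction $(\Rightarrow)$, assume $R_0^r v=J_0^r v$ for every $r\ge 0$, and set $w:=R_0^s v=J_0^s v$. The viscosity semi-group gives $J_s^t w=J_0^t v=R_0^t v$, so the claim reduces to
\[R_s^t w = J_s^t w,\]
that is, the minmax applied to the viscosity datum $w$ at time $s$ should reproduce the viscosity evolution. My plan is to apply the central convergence once more, now to the Cauchy problem with initial data $w$ at time $s$: for any partition $\pi'$ of $[s,t]$, $R_{\pi'}^{t-s} w\to J_s^t w=R_0^t v$ as $|\pi'|\to 0$. What remains is then to show that the value $R_{\pi'}^{t-s}w$ is in fact independent of refinement, so that the single-step $R_s^t w$ already equals this common limit.

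The main obstacle is precisely this stability of the minmax under refinement when the initial datum is itself the viscosity solution. Geometrically the reason is natural: since $w=J_0^s v$ is the viscosity selection on the geometric solution at time $s$, its enlarged pseudograph $\p w$ contains no rarefaction-inducing slopes, so no refinement of $\pi'$ can open up new critical values and shift the minmax. Making this intuition rigorous is exactly what the front-tracking construction of Sections 3 and 4 is designed to supply, and is one of the main motivations for the paper: front tracking realizes the viscosity solution as a genuine limit of approximate generalized Hamiltonian flows, and under the hypothesis $R=J$ the front structure carried by $\p w$ already coincides with the one selected by this limit, so one further minmax step changes nothing and the identity $R_s^t w=J_s^t w=R_0^t v$ follows.
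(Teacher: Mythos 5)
Note first that the paper does not prove Proposition \ref{sm} at all: it is imported verbatim from \cite{WQ2}, so your argument has to stand on its own. Your $(\Leftarrow)$ direction does: the semi-group identity propagates by induction along any subdivision to give $R_0^tv=R^t_{\zeta_n}v$ for every $\zeta_n$, and the convergence of iterated minmax to $J_0^tv$ then forces $R_0^tv=J_0^tv$. The one thing you must verify is that the convergence theorem you invoke is not itself proved via Proposition \ref{sm}; in the present paper it is not (Theorem \ref{dim1} rests on Lemma \ref{viscon}, Proposition \ref{274} and Corollary \ref{cor276}, none of which use \ref{sm}), so this direction is acceptable, if heavier machinery than the statement really needs.

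The $(\Rightarrow)$ direction has a genuine gap. You correctly reduce it to the identity $R_s^tw=J_s^tw$ for $w=J_0^sv=R_0^sv$, but your last paragraph does not prove this identity; it restates it as a geometric expectation and defers to ``the front-tracking construction of Sections 3 and 4''. Two concrete objections. First, the heuristic is not right as stated: $w=J_0^sv$ generically has shock singularities $\bar x$ at which $\partial w(\bar x)=[p_s^+,p_s^-]$ is a nontrivial interval, so the wave front of $w$ does contain the extra branches $\mathcal F^{t-s}_{\{\bar x\}}(w)$ emitted from these singularities; whether a single minmax step over a macroscopic interval $[s,t]$ ignores those branches and reproduces the viscosity selection is precisely the delicate point that Lemma \ref{position}, Proposition \ref{ge} and the contact-shock analysis of Section 4 address, and it can fail (e.g.\ when a shock of $w$ satisfies the entropy condition only non-strictly, one minmax step of length $t-s$ propagates the shock along a straight line while the viscosity shock is the curved contact shock) --- otherwise iteration would never be needed. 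Second, and symptomatically, after the reduction you never use the hypothesis $R_0^\tau v=J_0^\tau v$ again; yet without that hypothesis the claim $R_s^tw=J_s^tw$ for viscosity data $w$ is false in general, so any correct argument must re-engage it. (If instead one reads the proposition at the level of the solution operator, i.e.\ ``$R^\tau=J^\tau$ on all Lipschitz data'', then the forward implication is immediate from the uniqueness-based semi-group property of $J$; but your reduction presupposes the fixed-$v$ reading, under which the step you skipped is the entire content of that direction.) As written, the forward implication is asserted, not proved.
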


This motivates the construction of {\it iterated minmax}. Fixing a time interval $[0,T]$, let $\zeta_n=\{0=t_0<t_1<\dots<t_n=T\}$ be a subdivision of $[0,T]$. To each $s\in [0,T]$, we associate a number $m(\zeta_n,s)$, depending on $\zeta_n$
:
$$m(\zeta_n,s):= i, \quad \textrm{if}\quad t_i\leq s<t_{i+1}.$$
For simplicity, fixing a subdivision, we may abbreviate $m(\zeta_n,s)$ as $m(n,s)$.
\begin{defn} The {\it iterated minmax solution operator} for the (HJ) equation with  respect to a subdivision
$\zeta_n$ is defined as follows: for $0\leq s'<s \leq T$,
\[R_{H,\zeta_n}^{s',s}:=R_{H}^{t_{m(n,s)},s}\circ \dots\circ
 R_H^{s',t_{m(n,s')+1}}.\] When the
Hamiltonian $H$ is fixed , we may abbreviate our notation
$R_H^{s,t}:=R_H^{t-s}$ as $R_s^t$, and the iterated minmax as
\be\label{iterated}R_{s',n}^s:= R_{s',\zeta_n}^s= R_{t_{m(n,s)}}^{s}\circ\dots R_{s'}^{t_{m_n(s')+1}}.\ee  which
we call a n-step minmax, associated to a subdivision.
\end{defn}

Define the length of $\zeta_n$ by $|\zeta_n|:= \max_i|t_i-t_{i+1}|$.
Suppose that $\{\zeta_n\}_n$ is a sequence of subdivisions of $[0,T]$ such that $|\zeta_n|$ tends to zero as $n$ goes to infinity, and
let $\{R_{0,n}^sv(x)\}_n$ be the corresponding sequence of iterated minmax  for an initial function $v\in C^{\Lip}(\R)$. In \cite{WQ2}, we have shown, for the general Hamilton-Jacobi equation, that any such sequence converges to the viscosity solution. 

For the (HJ) equation in one space variable, using the method of ``front tracking'', which was first proposed by C.DAFERMOS (\cite{DA}), we can understand better the relation between iterated minmax and viscosity solution. 
\bigskip

Let us begin by considering the Riemann problem, with initial functions of
the form
\[v(x)=\begin{cases} p_-x,\quad x\leq 0,\\p_+x,\quad x\geq
0.\end{cases}\] There are two possibilities: if $p_-<p_+$, then $v$
is convex; if $p_->p_+$, $v$ is concave.  If $p_-<p_+$, denote by $(H|_{[p_-,p_+]})^{\smile}$ the convex envelop of $H$ on $[p_-,p_+]$, i.e.
\[(H|_{[p_-,p_+]})^{\smile}(p):=\sup\{h(p)|h\leq H,\,h\,\hbox{convex\,on}
[p_-,p_+]\}\] similarly, if $p_-<p_+$, denote by $(H|_{[p_+,p_-]})^{\frown}$ the concave envelop of $H$ on  $[p_+,p_-]$.
By the breakpoints of a piecewise linear function, we refer to the points where the function is not $C^1$. 

By the Hopf formula (\ref{hopf}), the viscosity solution of the Riemann problem of the (HJ) equation with piecewise linear $H$ can be given explicitely.  We take  for
example the convex case.: if $(H|_{[p_-,p_+]})^{\smile}$ has $m$ breakpoints in $(p_-,p_+)$, denoted by $p_1<\dots<p_m$, and $p_0=p_-$, $p_{m+1}=p_+$, set
\[s_i=\frac{H(p_{i+1})-H(p_i)}{p_{i+1}-p_i},\quad 0\leq i\leq m\]
Then
\[J_0^tv(x)=\begin{cases} xp_0-tH(p_0),\quad x\leq ts_0,\\
xp_{i+1}-tH(p_{i+1}),\quad x\in [ts_i,ts_{i+1}],\,0\leq i\leq m-1,\\
xp_{m+1}-tH(p_{m+1}),\quad x\geq ts_{m}.\end{cases}\]
In particular, $J_0^tv(x)$ has $m+1$ shock $\chi_i(t)=s_it$, $0\leq i\leq m$.

\begin{rem}\label{visconlin} We remark that, by the use of the convex (resp. concave) envelop of $H$,
it follows directly that, at each shock $\chi_i(t)$ of the viscosity solution
$J_0^tv(x)$, with the jump of derivatives $p_i$, $p_{i+1}$, the graph of $H$ between $p_i$ and $p_{i+1}$ lies
above (resp. below) the segment joining $(p_i,H(p_{i}))$ and $(p_{i+1},H(p_{i+1}))$. This is the so-called Oleinik's entropy condition for viscosity solutions,
see Lemma \ref{entropy} in the next section.
\end{rem}

Now we are willing to investigate the minmax solution under the same hypotheses.
We first give a profile for the wave front. The {\it wave front} of the geometric solution at time $t$ is given by \[ \mathcal
{F}^t:=\{(x,S_t(x,x_0,y_0)) | y_0\in \p v(x_0), x\in \x+t\p H(\y)\}.\]
For any subset $A\subset \R$, define
\[\mathcal {F}_A^t:=\mathcal{F}^t|_{\{x_0\in A\}}=\{(x,S_t(x,\z))\in \mathcal{F}^t, \x\in A\}.\]

We claim that the wave front $\mathcal{F}^t$ for $\vp_H^t(\p v)$ with $v$ and $H$ piecewise linear
(with finite pieces) is formed by pieces of straight line segments. Indeed,

1) $\F1_+^t:=\F1_{\{\x>0\}}^t$ and $\F1_-^t:=\F1_{\{\x<0\}}^t$ are two lines with slope
$\pr$ and $\pl$ respectively. Take the case $\x<0$ for example,
one has $y_0=v'(x_0)= \pl$, and
\[\F1_{\{x_0\}}^t=\{z(\x,y)=(\x+ ty,v(\x)+ t (y\pl-H(\pl))): y\in \p H(\pl)\}.\]
Then, for any $\x,\,\x'<0$, $y,\,y'\in\p H(\pl)$, the chord
connecting $z(\x,y)$ and $z(\x',y')$ is of slope $\pl$ . \\

2) Without loss of generality, we assume that
$p_-=p'_0<\dots<p'_k=p_+$ $(or\,p_+=p_0'<\dots<p_k'=p_-)$are the breakpoints of $H$ between $\pl$
and $\pr$. Then $\F1_{\{0\}}^t=\F1^t|_{\{\x=0\}}$ consists of $k$ line
segments with slope $p_i'$ which correspond to the breakpoints
$p'_i$. This can be seen from the formula
\[\F1_{\{0\}}^t=\{z(y,p)=(ty, t(yp-H(p))): p\in [\pr,\pl], y\in \p
H(p)\}.\] If $\pr,\,\pl$ are not breakpoints of $H$, then $\F1_{\{0\}}^t$
loses two line segments of slopes $p_+$ and $p_-$, but the whole
wave front $\F1^t$ does not change in the presence of the two segments
$\F1_{-}^t$ and $\F1_{+}^t$.

\begin{ex} Let $p_-<p_+$, which relates to a convex Riemann
initial function $v$. Figure \ref{hfrown} gives $H$ and its
corresponding wave front. The minmax is obtained by choosing the max.
\end{ex}

\input{hfrown.TpX}

\begin{lem}\label{position} For the Riemann problem with piecewise linear Hamiltonian, suppose that the graph of $H$ between $p_-$ and $p_+$ lies above (resp. below) the segment joining
$(p_-,H(p_-))$ and $(p_+,H(p_+))$. Assuming $p_-<p_+$ (resp. $p_->p_+$), then $\mathcal{F}_{\{0\}}^t$ lies below (resp. above) the graph of the viscosity solution $J_0^tv(x)$ in the wave front.
\end{lem}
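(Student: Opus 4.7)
My plan is to compare the wave front to the viscosity solution using the fact that, under the chord hypothesis, $J_0^tv$ reduces to just two affine branches. From the explicit formula for $J_0^tv$ on a piecewise-linear Riemann problem recalled just before the statement, the hypothesis that the graph of $H$ between $p_-$ and $p_+$ touches its convex (resp.\ concave) envelope only at the endpoints forces
\[
J_0^tv(x)=\begin{cases}xp_--tH(p_-),&x\le ts_0,\\ xp_+-tH(p_+),&x\ge ts_0,\end{cases}\qquad s_0=\frac{H(p_+)-H(p_-)}{p_+-p_-},
\]
which is $\max\{xp_--tH(p_-),\,xp_+-tH(p_+)\}$ in the convex case and the corresponding $\min$ in the concave case. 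From the description of $\mathcal{F}_{\{0\}}^t$ preceding the lemma, every point of the wave front has the form $(x,\,xp-tH(p))$ with $x=ty$, $y\in\partial H(p)$, and $p$ lying in the closed interval with endpoints $p_-,p_+$.

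For the convex case ($p_-<p_+$) I would simply invoke the Hopf formula \eqref{hopf}. Since $v$ is convex Riemann data with $v^*$ equal to $0$ on $[p_-,p_+]$ and $+\infty$ elsewhere, \eqref{hopf} specializes to
\[
J_0^tv(x)=\max_{p\in[p_-,p_+]}\bigl(xp-tH(p)\bigr),
\]
so each wave-front value $xp-tH(p)$ with $p\in[p_-,p_+]$ is automatically at most $J_0^tv(x)$, which is the claim.

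For the concave case ($p_->p_+$), the Hopf formula \eqref{hopf} does not apply to the concave $v$, so I plan to verify the inequality $xp-tH(p)\ge J_0^tv(x)$ by hand. The chord hypothesis supplies $H(p)\le H(p_\pm)+s_0(p-p_\pm)$ for $p\in[p_+,p_-]$, which, plugged into
\[
\bigl(xp-tH(p)\bigr)-\bigl(xp_\pm-tH(p_\pm)\bigr)=(p-p_\pm)x-t\bigl(H(p)-H(p_\pm)\bigr),
\]
yields the lower bound $(p-p_\pm)(x-ts_0)$. A sign check then closes the argument: for $x\le ts_0$ take $p_\pm=p_-$, so that $p-p_-\le 0$ and $x-ts_0\le 0$ make the bound non-negative and $xp-tH(p)\ge xp_--tH(p_-)=J_0^tv(x)$; for $x\ge ts_0$ take $p_\pm=p_+$ for the symmetric conclusion.

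The concave case is the only part that requires real effort: the stated Hopf formula \eqref{hopf} covers only convex initial data, so the comparison there must be done by hand. The chord hypothesis is precisely what makes this work, since it collapses $J_0^tv$ to two affine branches and reduces the verification to a single sign check; for more complex $H$ one would have to compare $\mathcal{F}_{\{0\}}^t$ against an interleaved pattern of shocks and rarefactions dictated by the envelope.
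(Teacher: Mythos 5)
Your proof is correct, and it splits into two halves that relate to the paper's argument differently. The paper treats only the convex case explicitly (declaring the other case ``similar'') and does so by the direct computation you reserve for the concave case: it writes a point of $\mathcal{F}_{\{0\}}^t$ as $(ty,\,t(yp-H(p)))$, subtracts the value of the relevant affine branch of $J_0^tv$, factors the difference as $t(p-p_-)\bigl(y-\tfrac{H(p)-H(p_-)}{p-p_-}\bigr)$, and reads off the sign from the chord hypothesis --- exactly your estimate $(p-p_\pm)(x-ts_0)\ge 0$ after the substitution $x=ty$. (The paper records this only at $x=\chi(t)$, i.e.\ $y=s_0$; you carry it out for all $x$, comparing against the correct branch on each side of the shock, which is the more complete version of the same computation.) Your convex case is genuinely different: invoking the Hopf formula \eqref{hopf} turns the claim into the tautology $xp-tH(p)\le\max_{q\in[p_-,p_+]}(xq-tH(q))=J_0^tv(x)$, and as a by-product it shows that the convex half of the lemma holds without the chord hypothesis (that hypothesis only serves to collapse $J_0^tv$ to two affine branches, which the Hopf comparison never uses). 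What the duality route buys is this extra generality and the elimination of all computation on one side; its cost is the asymmetry you correctly identify, namely that \eqref{hopf} as stated covers only convex initial data, so the concave case must still be done by hand --- and there your hand computation coincides with the paper's. Both halves are sound.
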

\begin{proof} We will give the proof in the case where $p_-<p_+$, while the other case is similar.
For convenience, we may assume that $p_{\pm}$ are not breakpoints of $H$. The viscosity solution $J_0^tv(x)$ has a shock $\chi(t)$ at which $\F1_{-}^t $ and $\F1_+^t$ intersect,
\[\chi(t)=\frac{H(p_-)-H(p_+)}{p_--p_+}t=x_-(t)+tH'(p_-)=x_+(t)+tH'(p_+)\]
for some $x_-(t)<0$ and $x_+(t)>0$. We will show that, at $x=\chi(t)$, $\F1_{\{0\}}^t$ lies below
$(\chi(t), p_-x_-(t)+t(p_-H'(p_-)-H(p_-)))$. The points in $\F1_{\{0\}}^t$ are given by $(ty,t(yp-H(p)))$ with $y\in \p H(p)$ and
$p$ breakpoint of $H$ in $(p_-,p_+)$. Let $ty=\chi(t)=x_-(t)+ tH'(p_-)$,
\beaa && t(y p-H(p))-(p_-x_-(t)+t(p_-H'(p_-)-H(p_-)))\\&=& t(y p-H(p))-t(yp_-H(p_-))\\
&=&t(p-p_-)(y-\frac{H(p)-H(p_-)}{p-p_-})\leq 0,\eeaa
where the inequality comes from $y=\frac{H(p_-)-H(p_+)}{p_--p_+}$ and the hypothesis on the graph of $H$.
Hence $\F1_{\{0\}}^t$ lies below the graph of $J_0^tv(x)$. In particular, they can intersect only at $\chi(t)$.
\end{proof}

\begin{defn}We say that $a\in \R$ admits a descending (resp.ascending)
cycle if there is a descending (resp.ascending) cycle $\sigma$ along
which $a$ is the maximum (resp.minimum) of the generating function
$S$.
\end{defn}
\begin{lem}
\label{rp} If $a\in \R$ admits at the same time a descending cycle
and an ascending cycle, then $a$ is both the minmax and maxmin
value.
\end{lem}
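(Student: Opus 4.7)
The plan is to unwind the definitions of minmax and maxmin and then use the fact, recalled earlier in the paper (citing \cite{WQ}), that the minmax and maxmin values coincide.

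First I would denote the common value $c := \inf\max f = \sup\min f$, where $f$ is the generating function of interest. By hypothesis there is a descending cycle $\sigma$ (representing the class $\Xi$) along which the maximum of $f$ on $|\sigma|$ equals $a$. Plugging $\sigma$ into the infimum defining the minmax yields immediately
\[
\inf\max f \;\leq\; \max_{\eta\in|\sigma|} f(\eta) \;=\; a,
\]
so $c\leq a$. Symmetrically, the hypothesis provides an ascending cycle $\tau$ (representing $\Delta$) on which $f$ attains its minimum $a$, so
\[
\sup\min f \;\geq\; \min_{\eta\in|\tau|} f(\eta) \;=\; a,
\]
giving $c\geq a$. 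Combining the two inequalities with $\inf\max f = \sup\min f$ forces $c=a$, i.e.\ $a$ is simultaneously the minmax and the maxmin value, which is what the lemma claims.

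There is no genuine obstacle here: the content of the statement is packaged entirely inside the cited equality $\inf\max f = \sup\min f$ from \cite{WQ}. The only thing to be mildly careful about is that the definitions involve the homology classes $\Xi$ and $\Delta$, so I would emphasize explicitly that a ``descending cycle on which $a$ is the maximum'' is, by Definition immediately above, a representative of $\Xi$ whose support realizes $a$ as $\max f$, and dually for $\Delta$; once this is spelled out the two one-line inequalities above finish the proof.
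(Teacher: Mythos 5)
Your proof is correct and follows essentially the same route as the paper: both arguments extract the inequalities $\inf\max f\leq a$ and $a\leq \sup\min f$ directly from the definitions applied to the given descending and ascending cycles. The only cosmetic difference is in closing the loop: you invoke the equality $\inf\max f=\sup\min f$ already recalled from \cite{WQ}, whereas the paper rederives the needed inequality $\sup\min f\leq\inf\max f$ on the spot from the fact that a descending cycle and an ascending cycle must intersect.
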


\begin{proof}
By definition, it is easy to see that $\inf\max S\leq a\leq \sup\min
S$. The inverse inequality follows from the fact that a descending
cycle and an ascending cycle must intersect.
\end{proof}

\begin{prop} For the Riemann problem with piecewise linear Hamiltonian, the minmax solution $R_0^t v(x)$
coincides with the viscosity solution.
\end{prop}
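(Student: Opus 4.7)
The plan is to combine the explicit description of the wave front just given with the two preceding lemmas, treating the convex and concave cases in parallel. Consider first $p_-<p_+$, so that $v$ is convex and Proposition \ref{prop255} immediately gives $R_0^tv(x)\le J_0^tv(x)$; the work is to establish the reverse inequality.

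Write $p_0=p_-<p_1<\dots<p_m<p_{m+1}=p_+$ for the breakpoints of the convex envelope $\Hs$ on $[p_-,p_+]$, and $\{p_i'\}$ for the generally larger list of breakpoints of $H$ itself in that interval. The viscosity solution $J_0^tv$ is piecewise linear in $x$ with slopes among the $p_j$ and shocks at $x=ts_j$, while the wave front $\F1^t=\F1_-^t\cup\F1_{\{0\}}^t\cup\F1_+^t$ is a finite union of line segments whose slopes range over $\{p_i'\}$ together with $\pl$ and $\pr$. Between two consecutive breakpoints $p_j,p_{j+1}$ of $\Hs$, the graph of $H$ lies above the chord by definition of the convex envelope; applying Lemma \ref{position} to this Riemann sub-problem shows that every segment of $\F1_{\{0\}}^t$ with slope strictly between $p_j$ and $p_{j+1}$ sits below the chord of $J_0^tv$ on $[ts_{j-1},ts_j]$. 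Consequently, for every $x$, the viscosity value $J_0^tv(x)$ is the largest wave-front height above $x$.

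To upgrade this geometric picture into the identity $R_0^tv(x)=J_0^tv(x)$, I would exhibit, at each $x$, an ascending cycle of $S_t(x,\cdot,\cdot)$ whose minimum is exactly $J_0^tv(x)$. Since $v$ and $H$ are piecewise linear, $S_t(x,\cdot,\cdot)$ is piecewise affine outside the quadratic end $Q$, hence has a finite Clarke-critical set and transparent sub-level structure; the ascending cycle is built by joining the critical point corresponding to the viscosity-selected wave-front segment to infinity along a path on which $S_t(x,\cdot,\cdot)\ge J_0^tv(x)$, which exists precisely because the other critical values over $x$ are smaller. Combined with a descending cycle producing $R_0^tv(x)$ itself, Lemma \ref{rp} then forces equality.

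The main obstacle is this last topological step, in particular at the shock locations $x=ts_j$ where two critical values of $S_t(x,\cdot,\cdot)$ coincide and the Morse-theoretic picture must be handled through the Clarke subdifferential rather than classical Morse theory. Once it is carried out, the concave case $p_->p_+$ follows by the dual argument using the concave envelope $\Hf$ and the ``resp.'' halves of Lemmas \ref{position} and \ref{rp}, with the roles of ascending and descending cycles interchanged and Proposition \ref{prop255} replaced by its maxmin counterpart.
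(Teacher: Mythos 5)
Your reduction of the problem is sound as far as it goes: the inequality $R_0^tv\le J_0^tv$ from Proposition \ref{prop255}, the identification of $J_0^tv(x)$ with the upper envelope of the wave front (which in fact follows directly from the Hopf formula (\ref{hopf}), since every point of $\mathcal{F}^t$ over $x$ has height $xp-tH(p)$ for some $p\in[p_-,p_+]$), and the observation that everything would follow from an ascending cycle of $S_t(x,\cdot)$ whose minimum is exactly $J_0^tv(x)$. But that last step is not a technical loose end to be ``carried out'' later: it is the entire content of the proposition, and it is precisely what the paper's proof consists of. Moreover, the justification you offer for its existence --- that such a cycle can be threaded through the top critical point ``because the other critical values over $x$ are smaller'' --- is not a valid argument. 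Being the largest critical value of $S_t(x,\cdot)$ does not entitle a point to an ascending cycle: in the dual situation ($p_->p_+$ with the graph of $H$ below the chord, e.g.\ $H$ convex with breakpoints in $(p_+,p_-)$), the minmax equals the maxmin equals the \emph{lowest} sheet of the front over the shock, so the largest critical value there admits no ascending cycle at all, even though all the other critical values are smaller. What decides the matter is the index of the fiberwise quadratic form $Q(x_0,y_0)=-x_0y_0$ and how the critical points of $S_t(x,\cdot)$ sit relative to it, and this has to be checked by an actual construction.

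The paper does exactly that: for each point $(ts,c_i^s)$ of the candidate (viscosity) graph it writes $S_t(ts,x_0,y_0)-c_i^s=x_0(p_{\pm}-y_0)+tA_{i+1,s}(y_0)$ with $A_{i+1,s}\le 0$ by convexity of $(H|_{[p_-,p_+]})^{\smile}$, and then defines an explicit curve $\sigma_i^s=(x_0(y_0),y_0)$, with a cut-off $\theta$ controlling the behaviour at infinity, along which $S_t-c_i^s\le 0$ (descending), resp.\ $\ge 0$ when $\theta$ is replaced by $-\theta$ (ascending); Lemma \ref{rp} then identifies $c_i^s$ as the minmax. Note also that your final invocation of Lemma \ref{rp} is off as stated: that lemma requires a descending and an ascending cycle realizing the \emph{same} value, whereas you pair an ascending cycle for $J_0^tv(x)$ with a descending cycle for $R_0^tv(x)$, which you do not yet know to be equal. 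The correct way to glue your two halves is: the ascending cycle gives $\sup\min S_t(x,\cdot)\ge J_0^tv(x)$, and since $\sup\min=\inf\max=R_0^tv(x)\le J_0^tv(x)$ by Proposition \ref{prop255}, equality follows. With that repair your architecture would work --- and it would be marginally more economical than the paper's, needing only the ascending half --- but only once the ascending cycle is actually produced.
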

\begin{proof} We first remark that for an arbitrary initial function,
the minmax and the viscosity solution may differ immediately.

Consider the Riemann problem with initial value $v(x)=p_-x,\,x\leq 0$,
$v(x)=p_+x,\,x>0$, with $p_-<p_+$. It is sufficient to prove that
$R_0^t v(x)$ is piecewise linear and has $m+1$ shocks
$\chi_i(t)=ts_i$, $0\leq i\leq m$, where
$s_i=\frac{H(p_{i+1})-H(p_i)}{p_{i+1}-p_i}$, with
$p_-=p_0<\dots<p_{m+1}=p_+$ the breakpoints
of the convex envelope $(H|_{[p_-,p_+]})^{\smile}$.

For a fixed time $t$, let $V_i$, $0\leq i\leq m$, be the intersection point of the line
segments corresponding to $p_i$ and $p_{i+1}$ in the wave front
$\F1^t$
\[V_i=(ts_i,t(s_ip_i-H(p_i)))=
(ts_i,t(s_ip_{i+1}-H(p_{i+1}))).\]

We want to show that the minmax $R_0^t v(x)$ is obtained from $\F1$
by selecting the segments $\overline{V_iV_{i+1}}$, \, $0\leq i\leq m-1$,
\[\overline{V_iV_{i+1}}=\{(ts,t(sp_{i+1}-H(p_{i+1})))
:s\,\textrm{between}\,s_i\,\textrm{and}\,s_{i+1}\} .\] Fix any $i$, $0\leq i\leq m-1$,
note $c_i^s:= t(sp_{i+1}-H(p_{i+1}))$. We claim that
$(ts,c_i^s)$ admits a descending simplex, i.e. there exists a
descending simplex $\sig_i^s$ such that
\[\max_{\z\in \sig_i^s}S_t(ts,\z)=c_i^s.\]
%Recall that for $(t,x)=(t,ts)$ fixed
%bounded, the maximum of $S_t(x,\cdot)$ along any simplex montant
%$\sig$ is realized in a bounded domain $D$, we can require $D$
%satisfies that the minmum of $S_t(x,\cdot)$ along any simplex
%descendant is realized also in $D$.
Write explicitly
 \beaa S_t(t s,\z)-c_i^s&=& \pr
\x-t H(\y)+ (t s-\x)\y - t(sp_{i+1}-H(p_{i+1}))\\
%&=&\x(\pr-\y)+t(\y-p_{i+1})\left(s-\frac{H(\y)-H(p_{i+1})}{\y-p_{i+1}}\right),\\
&=&\x(\pr-\y)+ t A_{i+1,s}(\y),\quad \hbox{if}\,\,\x\geq 0.\\S_t(t
s,\z)-c_i^s&=&\x(\pl-\y)+ tA_{i+1,s}(\y),\quad \hbox{if}\,\,\x\leq 0,\eeaa where
\beaa
A_{i+1,s}(\y):=\begin{cases}(\y-p_{i+1})\left(s-\frac{H(\y)-H(p_{i+1})}{\y-p_{i+1}}\right),\quad
\textrm{for}\quad \y\neq p_{i+1},\\0,\quad
\textrm{otherwise}.\end{cases}\eeaa By the convexity  of $(H|_{[p_-,p_+]})^{\smile}$, we
get
\[A_{i+1,s}(\y)\leq 0,\quad\hbox{if}\,\, y_0\in[p_-,p_+],\,s\,\textrm{between\,$s_i$\,and $s_{i+1}$}.\]
Note that the minmax and maxmin depend on the Hamiltonian $H$ only
on the compact set $[p_-,p_+]$. For each fixed $i$, we can modify $H$ outside $[p_-,p_+]$ by requiring
 \[ \frac{H(\y)-H(p_{i+1})}{\y-p_{i+1}}=\left\{
                                               \begin{array}{ll}
                                              \frac{H(p_-)-H(p_{i+1})}{p_--p_{i+1}}, & y_0\leq p_-, \\
                                              \frac{H(p_+)-H(p_{i+1})}{p_+-p_{i+1}}, & y_0\geq p_+,
                                              \end{array}
                                              \right.\]
 so that
\[A_{i+1,s}(y_0)\leq 0,\quad \forall y_0\in \R.\]
We construct a descending simplex $\sig_i^s=(\x(\y),\y)$ as follows:
\[
\x(\y)=\begin{cases}\frac{tA_{i+1,s}(\y)}{\y-p_-}+ \theta(y_0),\quad \y\in [p_{i+1},\infty),\\
    \frac{tA_{i+1,s}(\y)}{\y-p_+}-\theta(y_0),\quad \y\in (-\infty,p_{i+1},]
     \end{cases}\]
where $\theta:\R \to \R$ is a non negative continuous function, with
\[\theta(y_0)=\begin{cases} 0,\quad y_0\in [p_-,p_+],\\|y_0|,\quad |y_0|\geq
C,\end{cases}\] with $C>0$ a large constant. One sees that
$\sigma_i^s$ is a descending simplex since the first term in
$x_0(y_0)$ is bounded, thus
\[\lim_{|y_0|\to\infty}|y_0|^{-1}((x_0(y_0),y_0)-(y_0,y_0))=0.\]
Furthermore,
\[b_i^s(y_0):=S_t(ts,(x_0(y_0),y_0))-c_i^s=0,\quad \hbox{if}\,\,y_0\in [p_-,p_+]\]
For $y_0\in (p_+,\infty)$, the first term in $x_0(y_0)$ is negative, and one can verify that
$b_i^s(y_0)$ is negative both in the cases where $x_0(y_0)\leq 0$ and $x_0(y_0)\geq 0$. Similarly, we can show that
$b_i^s(y_0)\leq 0$ for $y_0\in (-\infty,p_-)$. Therefore $c_i^s$ admits $\sigma_i^s$ as a descending simplex.

On the other hand, replacing $\theta$ by $-\theta$ will give us an
ascending simplex for $c_i^s$, i.e. $b_i^s(y_0)\geq 0$, with equality for $y_0\in [p_-,p_+]$. Hence $c_i^s$ is at the same time a
minmax and maxmin value by Lemma \ref{rp}.

For the case where $p_->p_+$, that is, when $v$ is concave, we should take the concave envelop $(H|_{[p_+,p_+]})^{\frown}$ and the proof is similar.
\end{proof}

Now suppose that $v$ is piecewise linear continuous (with finite
pieces). Thanks to its local nature and semi-group property, one can construct the corresponding viscosity solution by
viewing $v$ as a combination of Riemann initial data and chasing the
interactions between the shocks of each sub Riemann problem. This is the so-called {\it front tracking} method, which consists in
the following inductive procedure: every time there are collisions between the
shocks, we restart by considering the resulting state as a new
initial function and propagate until the next time of collision. In
the presence of finite propagating speed of characteristics, the
number of shocks of the consequent solution will never blow up.
\begin{prop}[\cite{HL},Lemma 2.6]\label{vis}
Assume that $v(x)$ is a
piecewise linear continuous function with a finite number of
discontinuities of $dv(x)$, and $H$ is a piecewise linear continuous
function with a finite number of breakpoints in $\{|p|\leq \|\p
v\|\}$. Then the viscosity solution of the (HJ) equation $u(t,x)$ is
piecewise linear in $x$ for each $t$, and $\p_x u(t,x)$ takes values
in the finite set
\[\{dv(x),x\in \hbox{dom}(dv)\}\bigcup \{breakpoints\, of \,H\}.\]
Furthermore, the number of shocks of $u$ is finite, bounded by a
constant uniformly for all $t$, and the number of collisions of shocks is finite.
\end{prop}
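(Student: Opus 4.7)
The plan is to build $u(t,x)$ by the front-tracking algorithm, using the Riemann solution from the preceding proposition as the elementary brick and iterating across successive collision times of the fronts. Set
\[\mathcal{P}:=\{dv(x):x\in\dom(dv)\}\cup\{\textrm{breakpoints of }H\text{ in }\{|p|\leq \|\p v\|\}\},\]
which is a finite set. Let $x_1<\cdots<x_N$ enumerate the discontinuities of $dv$, so that between consecutive $x_k$ the function $v$ is affine with slope in $\mathcal{P}$.

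\emph{Initial step.} Near each $x_k$ the initial data is a Riemann problem with left and right slopes in $\mathcal{P}$, and the preceding proposition gives a local piecewise linear viscosity solution whose intermediate slopes are the breakpoints of the relevant convex or concave envelope of $H$, hence again in $\mathcal{P}$. The fan velocities are uniformly bounded by a constant depending only on $\|\p v\|$ and $H$, so the fans stemming from different $x_k$'s stay disjoint on some maximal interval $[0,t_1)$; gluing them with the linear pieces $v(x)-tH(q_k)$ on the unaffected regions gives a $u(t,\cdot)$ which is piecewise linear on $[0,t_1)$ with $\p_x u\in\mathcal{P}$.

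\emph{Induction step.} At $t=t_1$ finitely many pairs of adjacent fronts meet simultaneously. At each such collision point the two surrounding slopes belong to $\mathcal{P}$, so immediately after $t_1$ the local picture is again a Riemann problem in $\mathcal{P}$, resolved by the preceding proposition with intermediate slopes in $\mathcal{P}$. The semigroup property of the viscosity solution (noted at the start of Section 3) allows me to concatenate these local resolutions with the untouched regions to obtain a $u(t,\cdot)$ that remains piecewise linear with $\p_x u\in\mathcal{P}$ on $[t_1,t_2)$, where $t_2$ is the next collision time. Iterating gives assertions 1 and 2.

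\emph{Main obstacle.} The hard part is the uniform bound on the number of fronts together with the finiteness of collision times in any $[0,T]$. The key is Oleinik's entropy condition (Remark \ref{visconlin}): every Riemann resolution simply reads off the breakpoints of a convex or concave envelope, so it cannot increase the total variation $\sum_i|p_{i+1}-p_i|$ of the slope profile across a collision. Since every nonzero jump is bounded below by the minimum positive gap inside the finite set $\mathcal{P}$, this yields a uniform bound on the number of fronts at every time. Between collisions each front travels at a constant velocity drawn from the finite set of chord slopes of $H$ over pairs of elements of $\mathcal{P}$, so collisions are isolated in time; finally a Glimm-type interaction functional counting approaching pairs of fronts strictly decreases at each collision, which bounds the total number of collisions on any compact interval and yields assertions 3 and 4.
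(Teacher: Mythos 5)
The paper offers no proof of this proposition: it is quoted verbatim from \cite{HL}, Lemma 2.6 (Holden--Risebro's front tracking for scalar conservation laws), so there is nothing internal to compare with and your proposal is in effect a reconstruction of the cited argument. The first three quarters of it are sound. The breakpoints of the convex (resp.\ concave) envelope of $H$ on $[p_-,p_+]$ are necessarily breakpoints of $H$ itself (at a kink of the envelope the envelope must touch the graph of $H$, and it cannot have a kink where $H$ is affine), so every Riemann resolution keeps $\p_x u$ inside your finite set $\mathcal{P}$; the outgoing fan of a collision is monotone from the leftmost state $p_0$ to the rightmost state $p_2$, so it contributes $|p_2-p_0|\le |p_1-p_0|+|p_2-p_1|$ to the total variation of $\p_xu(t,\cdot)$, which is therefore nonincreasing; and since every jump is at least the minimal gap $\delta$ of $\mathcal{P}$, the number of fronts is bounded by $\mathrm{TV}(dv)/\delta$ uniformly in $t$.

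The gap is in the final step: the functional ``number of approaching pairs of fronts'' does not strictly decrease at every collision. Separate the two cases. If the incoming jumps $p_0\to p_1\to p_2$ have the same sign, the outgoing state is a \emph{single} front (the chord of $H$ over $[p_0,p_2]$ is the entire envelope, because each incoming chord lies strictly above the chord over the union away from $p_0$ and $p_2$), so the front count drops by one while $\mathrm{TV}$ is unchanged. If the incoming jumps have opposite signs, however, the outgoing Riemann problem is resolved by a monotone fan that may contain up to $|p_2-p_0|/\delta$ fronts; each of them can be approached by a distant front $d$, so the number of approaching pairs involving $d$ can rise from at most $2$ to the full fan size, and your functional increases. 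What rescues finiteness is that in this second case $\mathrm{TV}$ drops by $2\min(|p_1-p_0|,|p_2-p_1|)\ge 2\delta$. You therefore need a functional that trades this guaranteed drop of variation against the possible growth of the front count, for instance $F(t)=\mathrm{TV}(\p_xu(t,\cdot))^2+\delta^2 N(t)$, which one checks decreases by at least $\delta^2$ at every collision of either type and is bounded below by zero; this bounds the total number of collisions by $F(0^+)/\delta^2$ and completes the argument. With that replacement (and the observation that between collisions the finitely many fronts move with constant speeds, so collision times cannot accumulate for a different reason than the one you give), your proof goes through.
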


%\input{ftracking.TpX}

%Suppose that $T_1>0$ is the first time such that there are
%collisions between the shocks of the viscosity solution of the (HJ) equation.
\begin{lem} \label{viscon} Suppose that $t>0$, and $\mathcal{O}\subset \R$ is an open subset. If for any
$\ep>0$, there exists $s\in (0,\ep)$ such that $R_s^{\tau}\circ R_0^sv(x)$
 is a viscosity solution
of the $(HJ)$ equation for $(\tau,x)\in (s,t)\times \mathcal{O}$, then $R_0^{\tau}v(x)$ is also a viscosity solution for
$(\tau,x)\in (0,t)\times \mathcal{O}$.
\end{lem}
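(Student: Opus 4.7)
The plan is to realize $R_0^{\tau} v$ as the locally uniform limit, as $s \searrow 0$, of functions that are viscosity solutions by hypothesis, and then invoke the classical stability of viscosity solutions under local uniform convergence. The hypothesis allows me to pick a sequence $s_n \searrow 0$ such that each $w_n(\tau,x) := R_{s_n}^{\tau} \circ R_0^{s_n} v(x)$ is a viscosity solution of (HJ) on $(s_n, t) \times \mathcal{O}$. I then show that $w_n \to R_0^{\tau} v$ uniformly on every compact subset of $(0,t) \times \mathcal{O}$, and conclude via the test-function argument.

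The key estimate is a direct consequence of Proposition \ref{contis}. Writing $R^{\sigma} := R_H^{\sigma}$, the triangle inequality gives
\[
|w_n(\tau,x) - R_0^{\tau} v(x)| \;\le\; |R^{\tau-s_n}(R^{s_n} v)(x) - R^{\tau-s_n} v(x)| \;+\; |R^{\tau-s_n} v(x) - R^{\tau} v(x)|.
\]
By part 5 of Proposition \ref{contis}, the first term is bounded on any compact $K$ by $|R^{s_n} v - v|_{\tilde K}$, where $\tilde K$ depends only on $K$, $t$, $H$, and $\|\partial v\|$; part 3 then gives $|R^{s_n} v - v|_{\tilde K} \le s_n\, |H|_{\{|p|\le\|\partial v\|\}}$, using $R^0 v = v$. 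The second term is bounded by $s_n\, |H|_{\{|p|\le\|\partial v\|\}}$ directly by part 3. Hence uniformly on compact subsets of $[s_n, t] \times \R$,
\[
|w_n(\tau,x) - R_0^{\tau} v(x)| = O(s_n),
\]
which gives the desired local uniform convergence on $(0,t) \times \mathcal{O}$.

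For the stability step, let $\psi \in C^1((0,t) \times \mathcal{O})$ and suppose $R_0^{\tau} v - \psi$ attains a strict local maximum at $(\tau_0, x_0) \in (0,t) \times \mathcal{O}$. Choose a small closed ball $\bar B$ around $(\tau_0, x_0)$ contained in $(0,t) \times \mathcal{O}$; for $n$ large enough every $\tau$-coordinate in $\bar B$ exceeds $s_n$, so $\bar B \subset (s_n, t) \times \mathcal{O}$ and $w_n$ is a viscosity subsolution on $\bar B$. By the uniform convergence $w_n \to R_0^{\tau} v$ on $\bar B$, the maximizers $(\tau_n, x_n)$ of $w_n - \psi$ on $\bar B$ are interior points converging to $(\tau_0, x_0)$. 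The subsolution property yields $\partial_\tau \psi(\tau_n, x_n) + H(\partial_x \psi(\tau_n, x_n)) \le 0$, and passing to the limit gives the same inequality at $(\tau_0, x_0)$. The supersolution inequality follows by the symmetric argument, and the usual perturbation $\psi \mapsto \psi + \delta\bigl(|\tau-\tau_0|^2 + (x-x_0)^2\bigr)$ handles non-strict extrema. The only mildly delicate point is the moving domain $(s_n, t) \times \mathcal{O}$ on which $w_n$ is a viscosity solution, but this is handled automatically because any compact subset of $(0,t) \times \mathcal{O}$ eventually lies inside each of them.
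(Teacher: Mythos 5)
Your proof is correct and follows essentially the same route as the paper: pick $s_n\searrow 0$, use Proposition \ref{contis} to get locally uniform convergence of $R_{s_n}^{\tau}\circ R_0^{s_n}v$ to $R_0^{\tau}v$, and run the standard stability argument (strict maximum via a perturbation of the test function, convergence of maximizers). The only difference is cosmetic: you make the convergence rate explicit via parts 3 and 5 of Proposition \ref{contis} and use a quadratic perturbation where the paper uses a bump function $\psi'$.
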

\begin{proof} We will apply the standard argument for the stability of viscosity solutions.
Let $s_n>0$ be a sequence of decreasing numbers such that $s_n\to 0$, and note
\[u_n(\tau,x):=R_{s_n}^{\tau}\circ R_0^{s_n}v(x),\quad (\tau,x)\in \O_{s_n}:=(s_n,t)\times \O\]
and $u(\tau,x):=R_0^{\tau}v(x)$.

Given any $\bar{y}:=(\bar{\tau},\bar{x})\in \O_0$, let $\psi\in C^1(\O_{\bar{\tau}/2})$ such that $u-\psi$ attains a local maximum at $(\bar{\tau},\bar{x})$.
Take $\psi'\in C^1(\O_{\bar{\tau}/2})$ such that $0\leq \psi'<1$ if
$(\tau,x)=:y\neq \bar{y}$ and $\psi'(\bar{y})=1$. Then
$u-(\psi-\psi')$ attains a strict local maximum at $\bar{y}$.
For n large enough, $u_n$ is well-defined in $\O_{\bar{\tau}/2}$, and by Proposition \ref{contis}, $u_n\to u$ on $\overline{\O_{\bar{\tau}/2}}$, thus there exists
$y_n\in \O_{\bar{\tau}/2}$ such that
$u_n-(\psi-\psi')$ attains a local maximum at $y_n$ and $y_n\to \bar{y}$. By assumption, $u_n$ are viscosity solutions,
hence
\[\p_t (\psi-\psi')(y_n)+ H(\p_x (\psi-\psi')(y_n))\leq 0,\]
and we conclude that
\[\p_t \psi(\bar{y})+ H(\p_x \psi(\bar{y}))\leq 0,\]
since $d\psi'(\bar{y})=(\p_t\psi',\p_x\psi')(\bar{y})=0$.
Thus we have proved that $u$ is a viscosity subsolution. Similarly, we can prove that it is a viscosity supersolution, hence a viscosity solution. 
\end{proof}

\begin{prop} \label{274} Assume that $H$ and $v$ satisfy the hypotheses in Proposition \ref{vis}. If $T_1>0$ is such  that for $t\in (0,T_1)$, there is no collision of shocks of the
viscosity solution of the (HJ) problem, then the minmax solution $R_0^tv(x)$ coincides with the viscosity
solution for $t\in (0,T_1]$.
\end{prop}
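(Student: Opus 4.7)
The plan is to combine the preceding Riemann-problem proposition (for piecewise linear $H$) with Lemma \ref{viscon} through a localization argument: first establish $R_0^{\tau} v = J_0^{\tau} v$ on a short interval $(0, \tau_0)$ by a direct cycle construction, then bootstrap to the full interval $(0, T_1]$ via Lemma \ref{viscon}, closing with a continuity argument at the endpoint.

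For the small-time case, let $x_1 < \cdots < x_k$ be the breakpoints of $v$ with associated slope intervals $[p_j^-, p_j^+]$. By the no-collision hypothesis and continuity there is a $\tau_0 > 0$ such that for $\tau \in (0, \tau_0)$ the fan regions $C_j(\tau)$ emerging from each $x_j$ (bounded by the outermost characteristics and shocks of the local Riemann problem at $x_j$) are pairwise disjoint. Outside $\bigcup_j C_j(\tau)$ the function $v$ is affine on the relevant neighborhood, so both $R_0^{\tau} v(x)$ and $J_0^{\tau} v(x)$ reduce to the same linear propagation. For $x \in C_j(\tau)$, I would adapt the descending and ascending cycles built in the proof of the Riemann-problem proposition --- those of the form $\sigma = (x_0(y_0), y_0)$ constructed via the cut-off $\theta$ vanishing on $[p_j^-, p_j^+]$ and growing at infinity --- to produce global descending/ascending cycles of $S_{\tau}(x, \cdot, \cdot)$ that localize in $x_0$-space near $x_j$. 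Lemma \ref{rp} applied to these cycles gives $R_0^{\tau} v(x)$ equal to the Riemann-problem minmax at $x_j$, which by the Riemann-problem proposition equals $J_0^{\tau} v(x)$.

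With $R_0^{\tau} v = J_0^{\tau} v$ on $(0, \tau_0) \times \R$ in hand, I would apply Lemma \ref{viscon} on $(0, T_1) \times \R$. Given $\epsilon > 0$, pick $s \in (0, \min(\epsilon, \tau_0))$, so that $R_0^s v = J_0^s v$ is piecewise linear by Proposition \ref{vis}. Since the shocks of $J_0^{\tau} v$ do not collide on $(0, T_1)$, in particular they do not collide on $(s, T_1)$, and the cones emanating from the shocks of $J_0^s v$ remain pairwise disjoint throughout $(s, T_1)$. The same localization argument now applied at time $s$ with initial data $J_0^s v$ therefore works on the full remaining interval and yields $R_s^{\tau} \circ R_0^s v(x) = R_s^{\tau} J_0^s v(x) = J_s^{\tau} J_0^s v(x) = J_0^{\tau} v(x)$ for $(\tau, x) \in (s, T_1) \times \R$. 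This is a viscosity solution, so Lemma \ref{viscon} yields that $R_0^{\tau} v$ is a viscosity solution on $(0, T_1) \times \R$, hence equal to $J_0^{\tau} v$ by uniqueness. Continuity (Proposition \ref{contis}) extends the equality to $\tau = T_1$.

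The main technical obstacle is the locality step in the small-time case: verifying that the locally-constructed cycles near $x_j$ genuinely represent the correct global descending/ascending homology classes of $S_{\tau}(x, \cdot, \cdot)$ after the quadratic-at-infinity extension of $H$ outside a compact slope interval, and that the contributions from the other breakpoints $x_{j'}$, $j' \neq j$, are only critical ``at infinity'' relative to these cycles, so that the max/min values along them remain the Riemann-problem value at $x_j$.
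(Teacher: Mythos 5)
Your overall skeleton (small-time identification with the Riemann problems, then Lemma \ref{viscon} to pass $s\to 0$, then continuity at $\tau=T_1$) matches the paper's, but the bootstrap step contains a genuine gap. You claim that because the shocks of $J_0^\tau v$ do not collide on $(s,T_1)$, ``the cones emanating from the shocks of $J_0^s v$ remain pairwise disjoint throughout $(s,T_1)$,'' and you use this disjointness to run the localization argument on the whole interval $(s,T_1)$ at once. This is false in general: the front $\mathcal{F}_{\{\chi_j(s)\}}^{\tau-s}$ emitted from a singularity $\chi_j(s)$ spreads over an $x$-interval of width comparable to $(\tau-s)\cdot\operatorname{osc}(H')$ on the relevant slope range, so the fronts from neighbouring singularities will in general overlap well before the \emph{selected} shocks (which move at Rankine--Hugoniot speeds) ever meet. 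Absence of shock collisions controls the selected section of the front, not the multivalued fronts themselves. So your localization, which genuinely needs the fronts to be disjoint in order to reduce to independent Riemann problems, only gives you a short time interval after $s$, not all of $(s,T_1)$.

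The paper closes exactly this gap with a different mechanism: after the short-time step it invokes Remark \ref{visconlin} and Lemma \ref{position} to show that each front $\mathcal{F}_{\{\chi_j(s)\}}^{\tau-s}(v_s)$ lies (weakly) above or below the graph of the solution, so that as long as no shocks collide, two such fronts can intersect only \emph{strictly} above or strictly below that graph; since the minmax is continuous in $\tau$ (Proposition \ref{contis}), it cannot jump to such an intersection, and therefore it keeps selecting the same shocks $\chi_j(\tau)$ all the way up to $T_1$. Some continuation argument of this kind --- tracking what happens when the fronts do overlap and ruling out that the minmax switches branch there --- is indispensable, and your proposal does not supply it. Your small-time cycle construction is a reasonable (if more laborious) substitute for the paper's appeal to the ``local property'' of the minmax, and you correctly flag the homological localization as a technical obstacle; but the missing ingredient is the propagation from short time to $T_1$, which is where the real content of the proposition lies.
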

\begin{proof}  Let $\{x_i\}_{1\leq i\leq n_1}$ be the set discontinuities of
$dv$, and $\chi_j(t)$, $1\leq j\leq n_2$ be the shocks of the
viscosity solution $J_0^tv(x)$ for $t\in (0,T_1)$, where $\chi_j(t)<\chi_{j+1}(t)$. Denote by
$\mathcal{F}^t(v)$ the wave front at time $t$ originated from $v$.
Let $\ep>0$ be small enough such that the sub Riemann problems of
initial function $v$ are independent in the sense that those
$\F1_{\{x_i\}}^t(v)$ emitted from $\p v(x_i)$ in the wave front have no
intersections with each other for $t\in (0,\ep)$. Then for $t\in
(0,\ep)$, by the local property of minmax solution, $R_0^tv(x)$ is
chosen simply by combining every sub minmax problem, hence it is
piecewise linear, and by the local property of viscosity solution,
it is viscosity. For any $s\in (0,\ep)$, writing $v_s(x):=R_0^sv(x)$, we claim that $u_{s,t}(x):=R_s^t\circ R_0^sv(x)=R_s^tv_s(x)$ is the viscosity solution
for $t\in (s, T_1)$. For $t-s<\ep_1$ small enough, when there is no intersections between the $\F1_{\{\chi_j(s)\}}^{t-s}(v_s)$, $1\leq j\leq n_2$, we can apply the same argument as before : the minmax $u_{s,t}(x)$, which is obtained by taking the minmax of each sub Riemann problem with the singularity $\chi_j(s)$, is the viscosity solution ; In particular, by Remark \ref{visconlin} and Lemma \ref{position}, every $\F1_{\{\chi_j(s)\}}^{t-s}(v_s)$ lies above or below the graph of $u_{s,t}(x)$. As long as there is no collision of shocks,  $\F1_{\{\chi_j(s)\}}^{t-s}(v_s)$ and $\F1_{\{\chi_{j+1}(s)\}}^{t-s}(v_s)$ can intersect only strictly above or below the graph of $u_{s,t}(x)$. As the minmax $u_s(t,x)$ is continuous in $t$, it cannot instantaneously jump to the intersection, hence it preserves the shocks $\chi_j(t)$, thus is the viscosity solution.

This proves that, for $\ep>0$ small enough, and any $s\in (0,\ep)$, $R_s^t\circ R_0^sv(x)$, $t\in (s,T_1)$ is the viscosity solution. By  Lemma \ref{viscon}, as $s\to 0$, $R_s^t\circ R_0^sv(x)\to R_0^tv(x)$ which is the viscosity solution for $t\in (0,T_1]$.

\end{proof}

\begin{cor} \label{cor276} Assume that $H$ and $v$ satisfy the hypotheses in Proposition \ref{vis}. Then there exist $k>0$ and $T_j$, $1\leq j\leq k$ with $0<T_1<T_2<\cdots<T_k$ such that,
\be\label{finite}J_0^tv(x)=R_{T_{j}}^{t}\circ\cdots \circ R_{T_0}^{T_1}v(x),\quad T_{j}\leq t\leq T_{j+1},\quad 0\leq j\leq k.\ee
where, by convention,we write $T_0=0$ and $T_{k+1}=+\infty$. Furthermore, for any sequence of subdivisions $\{\zeta_n\}_n$ such that $|\zeta_n|\to 0$, the sequence of iterated minmax $\{R_{0,\zeta_n}^{t}v(x)\}_n$ converges to the viscosity solution
\end{cor}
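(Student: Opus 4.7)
The plan is to prove the two parts of the corollary in turn, first deriving the finite composition formula for $J_0^t v$ and then using it plus the stability estimates of Proposition~\ref{contis} to establish convergence for an arbitrary sequence of subdivisions.

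For the first part, I would take $T_1<\dots<T_k$ to be the collision times of shocks of the viscosity solution $J_0^t v$; by Proposition~\ref{vis} this set is finite. On each open interval $(T_{j-1},T_j)$ no shock collision occurs and, by Proposition~\ref{vis} again, $J_0^{T_{j-1}}v$ is piecewise linear continuous with finitely many breakpoints in its derivative, so Proposition~\ref{274} applied with time origin shifted to $T_{j-1}$ gives
\[
R_{T_{j-1}}^{t}\circ J_0^{T_{j-1}}v= J_{T_{j-1}}^{t}\circ J_0^{T_{j-1}}v = J_0^{t}v,\qquad T_{j-1}\le t\le T_j,
\]
where the last equality uses the semigroup property of viscosity solutions. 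A straightforward induction on $j$ then yields the stated composition formula.

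For the second part, given a sequence $\zeta_n$ with $|\zeta_n|\to 0$, I would compare $R_{0,\zeta_n}^t v$ with the refined subdivision $\tilde\zeta_n:=\zeta_n\cup\{T_1,\dots,T_k\}$. For $\tilde\zeta_n$, every sub-interval lies inside some $[T_{j-1},T_j]$, so the argument of the first part applies step by step: starting from the viscosity solution at one endpoint (piecewise linear with finitely many shocks by Proposition~\ref{vis}), the one-step minmax coincides with the viscosity solution at the next endpoint, and iterating gives $R_{0,\tilde\zeta_n}^tv= J_0^tv$ exactly. It remains to bound $|R_{0,\zeta_n}^tv-R_{0,\tilde\zeta_n}^tv|$ on compacts.

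To this end, I would interpolate between $\zeta_n$ and $\tilde\zeta_n$ by inserting the extra points $T_1,\dots,T_k$ one at a time. Inserting a point $\tau$ inside some $[t_{i-1},t_i]$ replaces the operator $R_{t_{i-1}}^{t_i}$ by $R_\tau^{t_i}\circ R_{t_{i-1}}^\tau$; applying Proposition~\ref{contis}(3) twice, each of these operators differs from the identity in $C^0(\mathbb R)$ by at most $(t_i-t_{i-1})|H|_{\{|p|\le\|\p v\|\}}$, hence they differ from each other by at most $2|\zeta_n||H|_{\{|p|\le\|\p v\|\}}$. By Proposition~\ref{contis}(2) the Lipschitz constants along the iteration remain bounded by $\|\p v\|$, so by Proposition~\ref{contis}(5) this $C^0$ error is preserved (on suitably enlarged compacts) under all subsequent steps. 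Summing the $k$ insertions gives
\[
\bigl|R_{0,\zeta_n}^{t}v-R_{0,\tilde\zeta_n}^{t}v\bigr|_K\le 2k|\zeta_n|\,|H|_{\{|p|\le\|\p v\|\}}\longrightarrow 0,
\]
which, combined with $R_{0,\tilde\zeta_n}^tv=J_0^tv$, proves convergence.

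The main technical point to check carefully is the propagation step: one must verify that a $C^0$ discrepancy introduced at some intermediate time does not grow under further applications of the minmax operator, which is ensured by part~(5) of Proposition~\ref{contis} together with the uniform Lipschitz bound of part~(2). Everything else is bookkeeping on the finite set of collision times produced by Proposition~\ref{vis}.
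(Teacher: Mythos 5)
Your proposal is correct and follows essentially the same route as the paper: the collision times $T_j$ from Proposition \ref{vis} give the composition formula via Proposition \ref{274} and the semigroup property, and the convergence is obtained by comparing $\zeta_n$ with its refinement by the $T_j$, using parts (3) and (5) of Proposition \ref{contis} to get exactly the paper's bound $2k|\zeta_n|\,|H|_{\{|p|\le\|\p v\|\}}$. The only cosmetic difference is that you insert the collision times one at a time while the paper runs a single induction over $j$; the estimates are identical.
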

\begin{proof} Let $T_j$, $1\leq j\leq k$ be the times of collisions of shocks of $J_0^tv(x)$. By Proposition \ref{vis}, $k$ is finite and we obtain (\ref{finite}) by Proposition \ref{274}.
For the second part, given a $T>T_k$, let $\zeta=\{1<t_1<t_2<\cdots<t_n=T\}$ be a subdivision of $[0,T]$; we
will show that \be\label{es276}|R_{0,\zeta_n}^t v-J_0^tv|_{C^0}\leq 2k|H|_{\{|p|\leq \|\p v\|\}}|\zeta|,\quad t\in [0,T].\ee
Denote by $\zeta'$  the subdivision obtained by adding to $\zeta$ the $T_j$, $1\leq j\leq k$, as new dividing points.
Then by the semi-group property of the viscosity solution, we have $R_{0,\zeta'}^tv(x)=J_0^tv(x)$.
Let $T_j^-:=\max_{1\leq i\leq n}\{t_i,\,t_i\leq T_j\}$ and notice that for $|\zeta|$ small, we have $T_j<T_{j+1}^-$.  For $t\in [0,T_1]$, $R_{0,\zeta}^tv(x)=J_0^tv(x)$. Writing $v_j(x):=R_{0,\zeta}^{T_{j}}v(x)$ and $v_j'(x):=R_{0,\zeta'}^{T_{j}}v(x)$, then for $t\in [T_{j+1},T_{j+2}]$,
\beaa &&|R_{0,\zeta}^tv-R_{0,\zeta'}^tv|_{C^0}\\&=& |R_{T_{j+1}^-,\zeta}^t\circ R_{T_j}^{T_{j+1}^-}v_j
-R_{T_{j+1},\zeta}^{t}\circ R_{T_{j+1}^-}^{T_{j+1}}\circ R_{T_j}^{T_{j+1}^-}v_j'|_{C^0}\\
&\leq & |R_{T_{j+1}^-,\zeta}^t\circ R_{T_j}^{T_{j+1}^-}v_j-R_{T_{j+1}^-,\zeta}^t\circ R_{T_{j+1}^-}^{T_{j+1}}\circ R_{T_j}^{T_{j+1}^-}v_j'|_{C^0}\\&+&
|R_{T_{j+1}^-,\zeta}^t\circ R_{T_{j+1}^-}^{T_{j+1}}\circ R_{T_j}^{T_{j+1}^-}v_j'-R_{T_{j+1},\zeta}^{t}\circ R_{T_{j+1}^-}^{T_{j+1}}\circ R_{T_j}^{T_{j+1}^-}v_j'|_{C^0}\\
&\leq & |R_{T_j}^{T_{j+1}^-}v_j-R_{T_{j+1}^-}^{T_{j+1}}\circ R_{T_j}^{T_{j+1}^-}v_j'|_{C^0}+ |T_{j+1}^- -T_{j+1}||H|_{\{|p|\leq \|\p v\|\}}\\
&\leq &|v_j-v_j'|_{C^0}+ 2|T_{j+1}^--T_{j+1}||H|_{\{|p|\leq \|\p v\|\}}.
\eeaa
Hence, by induction on $j$, we can obtain (\ref{es276}). It follows that, if $\{\zeta_n\}_n$ is a sequence of
subdivisions of $[0,T]$, such that $|\zeta_n|\to 0$, we have
\[\lim_{n\to \infty} R_{0,\zeta_n}^t v(x)=J_0^tv(x).\]
\end{proof}

\begin{thm}\label{dim1}
Suppose $v:\R\to\R$ is globally Lipschitz and $H:\R\to\R$ is
locally Lipschitz, then for any sequence of subdivisions $\{\zeta_n\}_n$ of $[0,T]$ such that $|\zeta_n|\to 0$, the sequence of iterated minmax $\{R_{H,\zeta_n}^{0,t}v(x)\}_n$ converges uniformly on compact subsets to the viscosity solution of the $(HJ)$ equation
\[\left\{
    \begin{array}{ll}
      \p_t u + H(\p_x u)=0,\quad  t\in (0,T] \\
      u(0,x)=v(x),\quad x\in \R
    \end{array}
  \right.\]
\end{thm}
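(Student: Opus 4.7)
The plan is to reduce the general statement to the piecewise linear setting of Corollary \ref{cor276} by a simultaneous approximation of $H$ and $v$, using the continuity estimates of Proposition \ref{contis} and the classical stability of viscosity solutions under uniform limits. Fix a compact $K\subset\R$, a horizon $T>0$ and $\ep>0$; set $L:=\|\p v\|$, let $M$ bound $\p H$ on $\{|p|\leq L\}$, and put $\tilde K:=\{x:\mathrm{dist}(x,K)\leq TM\}$.

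First I would pick piecewise linear continuous approximations $v^m$ of $v$ with $\|\p v^m\|\leq L$ and piecewise linear continuous $H^m$ of $H$ with finitely many breakpoints in $\{|p|\leq L\}$, such that $|v-v^m|_{\tilde K}+T\,|H-H^m|_{\{|p|\leq L\}}\to 0$ as $m\to\infty$ (this is routine density of piecewise affine functions in Lipschitz functions on a compact interval). Iterating Proposition \ref{contis}(4)--(5) along any subdivision $\zeta_n$, and exploiting that part (2) preserves the Lipschitz bound $L$ at every step, I get the estimate
\[|R_{H,\zeta_n}^{0,t}v-R_{H^m,\zeta_n}^{0,t}v^m|_K\leq |v-v^m|_{\tilde K}+T\,|H-H^m|_{\{|p|\leq L\}},\qquad t\in[0,T],\]
uniformly in the subdivision $\zeta_n$; choosing $m$ large makes this at most $\ep/3$.

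For such an $m$, Corollary \ref{cor276} applied to $(v^m,H^m)$ gives $|R_{H^m,\zeta_n}^{0,t}v^m-J_{0,H^m}^tv^m|_K\leq \ep/3$ as soon as $|\zeta_n|$ is small enough. The classical stability of viscosity solutions under local uniform convergence of Hamiltonian and initial datum then yields $|J_{0,H^m}^tv^m-J_0^tv|_K\leq \ep/3$ for $m$ large; this stability follows from the standard doubling-of-variables comparison argument, which in our Lipschitz setting produces estimates entirely analogous to those in Proposition \ref{contis}. Combining the three bounds by the triangle inequality gives uniform convergence of $R_{H,\zeta_n}^{0,t}v$ to $J_0^tv$ on $[0,T]\times K$.

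The main delicate point is the bookkeeping of the enlarged compacts in Proposition \ref{contis}(5) through the iteration: a priori each of the $n$ successive steps along $\zeta_n$ widens $K$ by $(t_{i+1}-t_i)M_i$, so one has to check that the final enlargement stays independent of $n$. This works because every intermediate iterate has Lipschitz constant at most $L$ by part (2), so the relevant propagation speed stays uniformly bounded by $M$, and the per-step widenings telescope to $TM$; a single enlargement to $\tilde K$ therefore covers the whole chain. A secondary technicality is to ensure that the piecewise linear Hamiltonian $H^m$ can be chosen so as to converge to $H$ uniformly on $\{|p|\leq L\}$ with the same slope control, which is a standard polyhedral approximation on the compact interval.
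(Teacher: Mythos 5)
Your proposal is correct and follows essentially the same route as the paper: piecewise linear approximation of both $H$ and $v$, the uniform-in-$n$ comparison estimate obtained by iterating Proposition \ref{contis}, Corollary \ref{cor276} for the piecewise linear case, and stability of viscosity solutions. The only cosmetic difference is that you run a direct $\ep/3$ triangle-inequality argument where the paper extracts a convergent subsequence by Arzel\`a--Ascoli and identifies its limit via stability plus uniqueness of viscosity solutions; both versions rest on exactly the same estimates (and, like you, the paper arranges for $v$ and $H$ to be linear outside compacts so that the approximants have finitely many pieces).
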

\begin{proof} For any compact subset $K\subset \R$, fixed once for
all, we
can modify $H$ and $v$ so that $H$ is linear outside $\mathcal{K}:=\{|p|\leq \|\p
v\|\}$ and $v$ is linear outside $\tilde{K}=\{|x_0|\leq |x|_K+ T\|\p (H|_{\mathcal{K}})\|\}$. For $k\in \N$, take the piecewise linear approximations of $v^k$ and $H^k$ for $v$ and
$H$ such that
\[|v-v^k|_{C^0}\leq \|\p v\|/k,\quad |H-H^k|_{C^0}\leq \|\p H\|/k.\]
Since $H$ and $v$ are linear outside compact sets, we can assume that both $v^k$ and $H^k$ have finite pieces.

Given any sequence of subdivisions $\{\zeta_n\}$ of $[0,T]$ such that
$|\zeta_n|\to 0$, let $R_{H,n}^t v(x)$ be the corresponding iterated minmax. It follows from Propostion \ref{contis} that the sequence $u_n(t,x):=R_{H,n}^tv(x)$ is equi-Lipschitz and uniformly bounded for $(t,x)\in [0,T]\times K$. Hence by the Arzela-Ascoli Theorem, $\{u_n\}_n$ has some convergent subsequence. We denote by   $\bar{R}_H^tv(x)$ its limit. 
We have
\[ |R_{H,n}^t v- R_{H^k,n}^t v^k|_{K}\leq
|v-v^k|_{\tilde{K}}+ T|H-H^k|_{\mathcal {K}}.\] By Corollary \ref{cor276}, for each $k$, the sequence $\{R_{H_k,n}^tv_k\}_n$ converges as $n\to\infty$; denoting its limit by $\bar{R}_{H^k}^tv_k(x)$, it is a viscosity solution of the equation $\p_t u+ H^k(\p_x u)=0$. We have
\[ |\bar{R}_H^t v- \bar{R}_{H^k}^t v^k|_{K}\leq
|v-v^k|_{\tilde{K}}+ T|H-H^k|_{\mathcal {K}}\to 0,\quad k\to\infty.\]
We get from the stability of viscosity solutions (ref.Theorem 1.4 \cite{GL}),
that $\bar{R}_H^t v(x)$ is a viscosity solution for $\p_t u + H(\p_x
u)=0$. By the uniqueness of the viscosity solution for the Cauchy problem, the limit $\bar{R}_H^tv(x)$ is independent of the convergent subsequence. Thus we conclude that the sequence of iterated minmax $\{R_{H,n}^tv(x)\}_n$ converges to the viscosity solution.
\end{proof}

\bigskip

\section{Singularities of viscosity solution}
In this section, we will see how their interpretation as a
limit of minmax help to describe the propogation of singularities of viscosity solutions.

The viscosity solution of the (HJ) equation is characterized equivalently by Oleinik's entropy condition:

Assume that $\mathcal {O}\subset (0,\infty)\times \R$ is an open
set, $u\in C(\mathcal {O})$, and that there is a $C^1$ curve $\chi$ dividing
 $\mathcal {O}$ into two open subsets $\mathcal {O}^+$ and
$\mathcal {O}^-$, $\mathcal {O}=\mathcal {O}^+ \cup \chi \cup
\mathcal {O}^+$.

\begin{lem}[Entropy condition.]\label{entropy}
Let $u \in C(\mathcal {O})$ and $u=u^+$ in $\mathcal {O}^+\cup\chi$,
$u=u^-$ in $\mathcal {O}^-\cup\chi$, where $u^\pm \in C^1(\mathcal
{O}^\pm \cup \chi)$. Then $u$ is the viscosity
solution of the equation $(HJ)$ in $\mathcal {O}$ if and only if:
\begin{enumerate}
\item $u^\pm$ are classical solutions in $\mathcal {O}^\pm$ respectively,
\item The graph of $H$ lies below (resp. above) the line segment
joining the points $(p_t^-, H(p_t^-))$ and $(p_t^+, H(p_t^+))$ if $p_t^+<p_t^-$ (resp.$p_t^-<p_t^+$), where $p_t^{\pm}:=\p_x u^{\pm}(t,\chi(t))$.
\end{enumerate}
\end{lem}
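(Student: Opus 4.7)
The plan is to prove the equivalence in two directions, in each case separating interior points of $\mathcal{O}^\pm$, where the equation is classical, from points on the shock curve $\chi$, where the entropy condition plays a role. For the interior part, since $u=u^\pm$ is $C^1$ in a neighbourhood, taking $\psi=u^\pm$ perturbed by a small paraboloid as a test function shows that the viscosity condition at such points is equivalent to the pointwise PDE $\partial_t u^\pm + H(\partial_x u^\pm)=0$. Thus condition (1) is equivalent to the viscosity condition holding off $\chi$, and all the remaining work concentrates at shock points.

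For the ``only if'' direction at a shock point $(\bar{t},\chi(\bar{t}))$, the first step is to differentiate the continuity identity $u^+(t,\chi(t))=u^-(t,\chi(t))$ in $t$; using $\partial_t u^\pm = -H(p^\pm)$ from (1) this yields the Rankine-Hugoniot relation
$$\dot{\chi}(\bar{t})=\frac{H(p^-)-H(p^+)}{p^- - p^+}.$$
In the case $p^+<p^-$, the function $u(\bar{t},\cdot)$ has a concave corner at $\chi(\bar{t})$, so for every slope $q\in[p^+,p^-]$ one can build a $C^1$ test function $\psi_q$ that touches $u$ from above at $(\bar{t},\chi(\bar{t}))$ with $\partial_x\psi_q=q$ and that propagates along the shock at the speed $\dot{\chi}(\bar{t})$ dictated by Rankine-Hugoniot. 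Substituting the resulting value of $\partial_t\psi_q$ into the viscosity subsolution inequality $\partial_t\psi_q+H(q)\le 0$ and rearranging gives exactly the statement that $H(q)$ lies below the chord joining $(p^+,H(p^+))$ and $(p^-,H(p^-))$ on the whole interval. The concave case $p^->p^+$ is symmetric, now extracting the reverse inequality from the supersolution side.

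For the ``if'' direction, suppose (1) and (2) hold and let $\psi$ be a test function with $u-\psi$ attaining a local maximum at a shock point with $p^+<p^-$. The one-sided maximality conditions on $\overline{\mathcal{O}^\pm}$, together with the $C^1$ regularity of $u^\pm$, pin down $\partial_x\psi$ to lie in $[p^+,p^-]$ and fix the value of
$$\partial_t\psi+\dot{\chi}(\bar{t})\,\partial_x\psi$$
to the common quantity $\partial_t u^\pm + \dot{\chi}(\bar{t})\,p^\pm$, consistent across the two sides by Rankine-Hugoniot. Eliminating $\partial_t\psi$ and invoking the chord condition (2) yields the subsolution inequality $\partial_t\psi + H(\partial_x\psi)\le 0$. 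The supersolution inequality at a concave corner is automatic because $u-\psi$ can only attain a local minimum there with $\partial_x\psi\in\{p^+,p^-\}$, reducing the matter to the classical equation on $\mathcal{O}^\pm$. The opposite sign configuration $p^->p^+$ is treated symmetrically.

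The main obstacle will be the careful construction in the forward direction of the family of test functions $\psi_q$ with prescribed gradient at the shock point: they must be genuinely $C^1$, strictly dominate $u$ in a punctured neighbourhood of $(\bar{t},\chi(\bar{t}))$, and respect the kinematics of the shock. This amounts to writing $\psi_q$ as the affine tangent plane with gradient $(\,-\dot{\chi}(\bar{t})q+c,\,q\,)$ plus a small quadratic correction in the direction transverse to $\chi$, chosen so that contact is achieved only at $(\bar{t},\chi(\bar{t}))$; verifying that such a correction can indeed be matched to the $C^1$ data $u^\pm$ on each side is the delicate geometric step that underlies the whole argument.
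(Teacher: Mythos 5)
The paper states this lemma without proof---it is the classical Oleinik entropy characterization, quoted as known background (cf.\ the discussion around Remark \ref{visconlin} and the reference to \cite{HL})---so there is no in-paper argument to compare against. Your proposal is the standard proof and its overall logic is sound: reduce to the shock, derive Rankine--Hugoniot by differentiating $u^+(t,\chi(t))=u^-(t,\chi(t))$ using $\partial_t u^\pm=-H(p^\pm)$, observe that for $p^+<p^-$ the super\-differential of $u$ at the kink is exactly the segment joining $(-H(p^+),p^+)$ and $(-H(p^-),p^-)$ while the subdifferential is empty, and translate the subsolution inequality on that segment into the chord condition; the converse follows from the one-sided first-order conditions you describe. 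Two points deserve tightening. First, your ``small quadratic correction'' cannot literally be quadratic: $u^\pm$ are only $C^1$, so $u-\ell$ is controlled by $o(|y-\bar y|)$ rather than $O(|y-\bar y|^2)$, and the correction must be built from the modulus of continuity of $\nabla u^\pm$ (the standard lemma realizing every element of $D^+u$ by a $C^1$ test function); relatedly, to see that the affine function with gradient a convex combination of $\nabla u^+$ and $\nabla u^-$ dominates $u$ near the shock, you should note that the jump of the normal derivative, $(\nabla u^+-\nabla u^-)\cdot(-\dot\chi,1)=(p^+-p^-)(1+\dot\chi^2)<0$, forces $u=\min(u^+,u^-)$ locally after $C^1$-extension across $\chi$. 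Second, your statement that at a concave corner $u-\psi$ ``can only attain a local minimum with $\partial_x\psi\in\{p^+,p^-\}$'' is not quite right: the same normal-derivative inequalities that pin $\nabla\psi$ to the segment in the subsolution case are incompatible in the supersolution case, so no $C^1$ function touches $u$ from below at a strict kink and the supersolution condition there is vacuous. Neither issue is a genuine gap; both are standard repairs.
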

In particular, we say that $\chi(t)$ {\it strictly}
verifies the entropy condition if it verifies the entropy condition and the line segment joining $(p^-_t,H(p_t^-))$ and $(p^+_t,H(p^+_t.))$ is not tangent to the graph of $H$.

\begin{rem} Equivalently, the entropy condition can be described, for example, when $ p_t^+<p_t^-$, as
\be \label{RH}\frac{H(w)-H(p_t^-)}{w-p_t^-}\geq \frac{H(p_t^+)-H(p_t^-)}{p_t^+ - p_t^-}(=\dot{\chi}(t))
\ee for any
$w$ between $p_t^+$ and $p_t^-$,  The ``='' between brackets
refers to the Rankine-Hugoniot condition which is a necessary condition for
$u$ to be a weak solution in the sense of distribution.
\end{rem}

In the following, we suppose that the Hamiltonian $H$ is smooth,
with generic critical and inflection points, and that the initial function $v$ is
globally Lipschitzian.

Recall that the {\it wave front} of the $(HJ)$ equation  at time
$t$ is
\[ \F1=\F1^t(v):=\{(x,S_t(x;x_0,y_0)) | y_0\in \p v(x_0), x= \x+tH'(\y)\},\]
where $S_t(x,x_0,y_0)=v(x_0)+xy_0-tH(y_0)-\x\y$ is the generating family. It is a curve in the
plane $\R^2$ having cusps and self-intersection points. Adopting the notions in \cite{Elia}, a wave
front is said {\it typical} if it has no triple self-intersection points
and the sets $C$ and $D$ of cusp and double self-intersection points respectively are
discrete and disjoint. The {\it branches} of the front $\F1$ are the irreducible
components of $\F1\setminus C$. Every branch is the graph of a
partially defined $C^1$ curve. The union in space-time of the fronts
is called the {\it big front},
denoted by $\tilde{\F1}$,
\[\tilde{\F1}=\tilde{\F1}(v):=\{(t,x,S_t(x;x_0,y_0))|y_0\in \p v(x_0), x=x_0+ t
H'(y_0)\}.\] The projection of the self-intersections of
$\tilde{\F1}$ onto the $(t,x)-$plane are called {\it shocks}.

In particular, we distinguish the {\it genuine big front} which is by
definition the restriction of the big front $\tilde{\F1}$ to
$\{x_0\in \dom (dv)\}$, that is to the points such that $v$ is differentiable at $x_0$, with corresponding {\it genuine
branches} and {\it genuine shocks}.

%Recall that for each wave front $\F1$ there is a continuous section selected
%by the minmax $R_{S}(x)$, where $S$ is the generating family for $\F1$. In particular,
%for $\F1=\F1^t(v)$, we have the selector $R_0^tv(x)$.

%Consider the Riemann initial wave front $\F1=\{(x, v(x))\}$ where
%\[v=\begin{cases} p_-x,\quad x<0,\\p_+x,\quad x\geq 0\end{cases}\]

\begin{prop}\label{ge} Suppose $\tilde{\F1}(v)$ is a typical big
front for the $(HJ)$ equation,
\begin{enumerate}
\item If the genuine shocks strictly verify the entropy condition, then within sufficiently small time, the minmax solution preserves the shocks. As a consequence, it coincides with the viscosity solution.
\item On the contrary, if some genuine shock violates the entropy
condition, by this we include rarefactions \footnote{The rarefaction means there is separation of genuine branches.}, then the
minmax solution admits new shocks other than the genuine shocks\footnote{By this, we may include the disappearance of the genuine shocks.}
\end{enumerate}
\end{prop}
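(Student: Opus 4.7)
Plan. The argument hinges on the fact that $\tilde{\mathcal{F}}(v)$ being typical confines the shock phenomena to a discrete set of $C^1$ curves in a small time window $(0,T^*)$, so that away from neighborhoods of these curves the wave front is a single smooth branch above each $x$ and the minmax automatically agrees with the viscosity solution. The work is therefore local around each genuine shock $\chi_j(t)$.

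For assertion (1), I plan to combine the piecewise linear Riemann analysis of Section 3 with the stability estimates of Proposition \ref{contis}. Strict entropy at $\chi_j$ is an open condition on $H$ restricted to the interval with endpoints $p_t^\pm$: the chord strictly separates from the graph of $H$. Approximating $(v,H)$ by piecewise linear $(v^k,H^k)$ as in Theorem \ref{dim1}, the approximated problem still has genuine shocks strictly verifying entropy for $k$ large; so by Proposition \ref{274} the minmax $R_{0,H^k}^{t} v^k$ coincides with the viscosity solution $J_{0,H^k}^{t} v^k$ on a uniform neighborhood of each $\chi_j$ up to a time $T^*$ independent of $k$. Passing to the limit with Proposition \ref{contis} parts (4)--(5) and the stability of viscosity solutions from \cite{GL} then yields $R_0^t v(x) = J_0^t v(x)$ on that neighborhood, proving (1).

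For assertion (2), I work locally around a non-entropic genuine shock, distinguishing two subcases. If the chord lies on the wrong side of the graph of $H$, so that the shock is compressive but violates entropy, I adapt the construction in the proof of Lemma \ref{position} to build, near the shock, a descending cycle selecting the ``wrong'' branch of the front, so that the minmax value differs from the viscosity value; the continuity of the minmax in $(t,x)$ then forces a new shock between the branch it selects and the one selected by the viscosity solution. If instead the genuine branches separate (rarefaction), the gap in $x$-space means that the critical points of $S_t(x,\cdot,\cdot)$ above points of the gap come only from $x_0$ where $v$ fails to be differentiable, or from a cusp created by the minmax selection; in either subcase the minmax develops a shock distinct from any genuine shock of the viscosity solution.

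The main obstacle is the rigorous production of the descending and ascending cycles in (2) without the piecewise linear scaffolding of Lemma \ref{position}. I would again approximate $(v,H)$ by piecewise linear $(v^k,H^k)$ whose local shock geometries are close to and of the same character as the original non-entropic ones; Corollary \ref{cor276}, together with the explicit description of $J_{0,H^k}^{t} v^k$, then exhibits the extra shocks of $R_{0,H^k}^{t} v^k$, and a uniform control on their positions lets them survive the limit $k\to\infty$, yielding the desired new shocks of $R_0^t v$.
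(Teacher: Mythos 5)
Your strategy for assertion (1) --- approximate $(v,H)$ by piecewise linear $(v^k,H^k)$, invoke Proposition \ref{274} for the approximate problems on a collision-free interval, and pass to the limit --- has a gap that I do not see how to close: the collision-free time $T_1(k)$ of the approximate viscosity solution is \emph{not} bounded below uniformly in $k$. Near a singularity $\bar x$ generating a strictly entropic genuine shock $\chi$ (say $p_0^+<p_0^-$, so that $H'(p_0^+)<\dot\chi(0)<H'(p_0^-)$ by strictness), the breakpoints of $v^k$ at distance $O(1/k)$ from $\bar x$ emit small entropic shocks travelling with speeds close to $H'(p_0^\pm)$; these are absorbed by the main shock at times $O(1/k)\to 0$. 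Hence Proposition \ref{274} only yields $R_0^t v^k=J_0^t v^k$ for $t<T_1(k)\to 0$, and nothing survives the limit $k\to\infty$. Replacing Proposition \ref{274} by Corollary \ref{cor276} does not help, because that corollary concerns the \emph{iterated} minmax, whereas Proposition \ref{ge} is a statement about the single minmax $R_0^tv$; the whole point of the proposition is to decide when the one-step minmax already agrees with the viscosity flow, so results about $R^t_{0,\zeta_n}$ cannot be substituted for it. (A secondary issue: even granting $C^0$-convergence, the assertion ``preserves the shocks'' concerns the singular set of $R_0^tv$, which is not controlled by $C^0$ limits.)

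The paper's proof of (1) avoids approximation entirely and is purely local on the wave front: parametrizing the piece $\mathcal{F}_{\{\bar x\}}^t$ emitted by the singularity by $\alpha_t\in[p_0^+,p_0^-]$ over the shock line $x=\chi(t)$, it introduces the gap function $A(t)$ between the shock value and that piece, computes $\dot A(t)=(p_t^--\alpha_t)\bigl(\dot\chi(t)-\frac{H(\alpha_t)-H(p_t^-)}{\alpha_t-p_t^-}\bigr)$, and uses the \emph{strict} entropy condition to get $\dot A<0$ for small $t$, so $\mathcal{F}_{\{\bar x\}}^t$ stays strictly on one side of the shock; since the graph of the minmax is a continuous section of the front, it is then forced to consist of the two genuine branches. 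You should aim at an argument of this kind. Your treatment of (2) is closer to the paper's (the rarefaction case is essentially identical), but the compressive subcase remains only a plan: rather than constructing descending cycles, the paper argues by contradiction --- if the minmax preserved a non-entropic genuine shock it would satisfy the semigroup property locally, hence be the viscosity solution by Proposition \ref{sm}, contradicting the entropy characterization of Lemma \ref{entropy}.
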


\begin{proof}
% We can always assume that the genuine shocks begins from $t=0$, and that $t_{c}>0$ is small enough,
%since once the genuine shocks are preserved, we have semi-group
%property.

%The big front might change by steps?

1) Let $\bar{x}$ be a  singularity of $v$,  $\chi(t)$  the
genuine shock generated from $\bar{x}$, and $u^\pm (t,x)$  the evolution of the two genuine branches whose intersection gives $\chi(t)$. Note $p_t^\pm:=\p_x
u^\pm(t,\chi(t))$. We shall suppose that $p_0^+ < p_0^-$, the other
case being similar. There is $\epsilon>0$, such that  there exists a neighborhood $\mathcal{O}\in (0,\ep)\times \R$ of $\chi(t)$ in the big front $\tilde{\F1}$ that contains exactly the two branches $u^{\pm}$ as well as
$\F1_{\{\bar{x}\}}^t$, where: \be \label{lx} \mathcal
{F}_{\{\bar{x}\}}^t=\{z(p)=\left( \bar{x}+ t H'(p), v(\bar{x})+ t(p H'(p)-
H(p))\right) ,\,p\in [p_0^+,p_0^-]\}.\ee We claim that for $\ep$ small enough, $\mathcal
{F}_{\bar{x}}^t$ does not interfere with the selection of the minmax
solution $R_0^tv(x)$ for $(t,x)\in \mathcal{O}$. For this purpose, it is enough to show that there is no point in
$\F1_{\{\bar{x}\}}^t$ lying below the genuine shock $(\chi(t),u(t,\chi(t)))$, where \beaa
\chi(t) &=& x^\pm_t+ t
H'(p_t^\pm),\\u(t,\chi(t))&=&u^\pm(0,x^\pm_t)+ t (p_t^\pm
H'(p_t^\pm)-H(p_t^\pm )).\eeaa  Suppose $t\mapsto \a\in [\pz,\pf]$ is a $C^1$ function
of $t$ such that \[z(\a)=(\bar{x}+ t H'(\a), v(\bar{x})+ t(\a H'(\a)-H(\a)))\in\mL^t\cap \{x=\chi(t)\} .\] 

Define
\[ A(t)=(u^-(0,x_t^-)+ t
(p_t^- H'(p_t^-)-H(p_t^- )))-(u(0,\bar{x})+ t(\a H'(\a)-H(\a))).\]
Then $A(0)=0$. Note that $\p_x u^-(0,x_t^-)=\p_x u^-(t,\chi(t))$ because they lie in
the same characteristics,  \beaa \dot{A}(t)&=&\pt \dot{x}_t^-+ \pt
H'(\pt)-H(\pt)-\a H'(\a)+ H(\a)+ t(\pt H''(\pt)\dot{p}_t^--\a
H''(\a)\dot{\alpha}_t)
\\&=&\pt (\dot{\chi}(t)-H'(\pt)-t H''(\pt)\dot{p}_t^-)+ \pt
H'(\pt)-H(\pt)-\a H'(\a)+ H(\a)
\\&\,\,&+ t(\pt H''(\pt)\dot{p}_t^-+ \a(H'(\a)-\dot{\chi}(t)))
 \\&=& (\pt-\a)(\dot{\chi}(t)-\frac{H(\a)-H(\pt)}{\a-\pt})
.\eeaa

If $\chi(t)$ strictly verifies the entropy condition, then we have $\alpha_0:=\lim_{t>0,t\to 0} \alpha_t<p_0^{-}$. Indeed, if $\alpha_0=p_0^-$, then
\[ \dot{\chi}(0):=\lim_{t>0,t\to 0}\dot{\chi}(t)=\lim_{t>0,t\to 0}H'(\alpha_t)+tH''(\alpha_t)\dot{\alpha}_t=H'(\alpha_0)=H'(p_0^-),\]
which means that the shock $\chi$ does not strictly verify the entropy condition. Therefore, for sufficiently small time $t$, we can assume that $\alpha_t<p_t^-$, then by the entropy condition (\ref{RH}), we get $\dot{A}(t)<0$, hence $A(t)<0$.
Now the only way to select a
continuous section in $\F1^t|_{\mathcal{O}}$ is choosing the two branches
$u^{\pm}$ with the genuine shock $\chi(t)$. Since the graph of the minmax solution
$\{(x,R_0^tv(x))\}$ is a continuous section in $\F1^t$, we get
\[R_0^tv(x)=\begin{cases} u^{+}(t,x),\quad (t,x)\in \mathcal{O}^+\cup\chi\\
u^-(t,x),\quad (t,x)\in  \mathcal{O}^-\cup\chi \end{cases}\]
by the local property of the minmax.

2) Suppose that a genuine shock violates the entropy condition: if its
two branches $u^{\pm}$ separate, there must be a piece from $\F1_{\{\bar{x}\}}^t$
to compensate in order to get a continuous section; if they intersect, the
minmax solution can not preserve the genuine shock, otherwise
it satisfies the semi-group property, from which, followed by Proposition
\ref{sm}, that it must be the viscosity solution, thus a contradiction.
\end{proof}

\begin{ex} If $H$ is convex (resp. concave), then in the wave fronts, the branches from the singularites of the initial function always lie above (resp. below) the the genuine branches, hence the min (resp. max) solution is the viscosity solution. See Figure \ref{convex}.
\input{convex.TpX}
\end{ex}

\begin{ex} If $H$ is non convex, but the genuine shock
does not violate the entropy condition, the configuration of the wave front are depicted in Figure \ref{nonconvex}.
\input{nonconvex.TpX}
\end{ex}

Before a critical time $t_c$ where the genuine branches violate the entropy condition, the graph of the viscosity solution, composed of the genuine branches, is always contained in each wave front $\F1^t(v)$, hence in the big front $\tilde{\F1}(v)$. After that,
the behavior of the viscosity solution becomes more complicated since
the branches generated by the singularities of the initial function begin to interfere. As a consequence, the viscosity
solution may not be contained in the big front.

\vspace{6pt}

%As in Proposition \ref{ge}, we refer a {\it typical singularity} the intersection
%of two genuine branches in a typical wave front. We will investigate the propagation of
%a typical singularity which are not contained in the big front.
% Note that at the critical
%time $t_c$, there may occurs triple intersections in the wave front $\F1^{t_c}(v)$, which is hence not
%typical\red{[CELA PEUT ARRIVER AVANT ET EST GENERIQUE POUR LES FAMILLES A UN PARAMETRE]}.
Instead of considering the big front $\cup_{t\geq t_c}\{t\}\times \F1^t(v)$, let us consider \[\tilde{\F1}=\cup_{t\in (0,\ep)}\{t\}\times\F1^{t}(R_0^{t_c}v)\] for $\ep>0$ small.
% where
%$\tilde{\F1}$ can be assumed to be typical.

Let $v_c(x):=R_0^{t_c}v(x)$, suppose that $R_0^tv_c(x)$ has a $C^1$ shock $\chi(t)$ in $(t,x)\in \O_{\ep}:=(0,\ep)\times \O$ and that $\chi(t)$ violates the
entropy condition. Given a subdivision $\{\zeta_n\}$ of $(0,\ep)$, where
$\zeta_n=\{0=t_0<t_1^{(n)}<\dots<t_n^{(n)}=\ep\}$, let $\chi_n(t)$ be a continuous, piecewise $C^1$ shock
of the related iterated n-step minmax $u_n(t,x):=R_{0,n}^tv_c(x)$.
Note $p_t^{\pm}:=\p_x u_n(t,\chi_n(t)\pm)$. By Proposition \ref{ge} and Lemma \ref{viscon}, for $|\zeta_n|$ small enough,
there is violation of the entropy condition: for $t\in (t_k^{(n)},t_{k+1}^{(n)}]$, $\chi_n(t)$ is
constructed by the intersection of two branches, one of which comes
from $\F1_{\{\chi_n(t_k^{(n)})\}}^{t_k^{(n)}}(u_{n}(t_k^{(n)},\cdot))$. Assuming that
\[\chi_n(t)=\chi_n(t_k^{(n)})+ (t-t_k^{(n)})H'(p_t^-),\quad t\in (t_k^{(n)},t_{k+1}^{(n)}],\]
where $p_t$ lies between $p_{t_k^{(n)}}^+$ and $p_{t_k^{(n)}}^-$ for $t\in (t_k^{(n)},t_{k+1}^{(n)}]$,
note that $\chi_n(t)$ satisfies the
Rankine-Hugoniot condition,
\[\dot{\chi}_n(t_k^{(n)})=\frac{H(p^+_{t_k^{(n)}})-H(p^-_{t_k^{(n)}})}{p^+_{t_k^{(n)}}-p^-_{t_k^{(n)}}}=H'(p_{t_k^{(n)}}^-),\,\,0\leq k\leq n-1.\]

Thus the limit $\bar{R}_0^t v_c(x)$ of the sequence of iterated minmax $\{R_{0,n}^t
v_c(x)\}_n$ has a $C^1$ shock $\bar{\chi}(t)$ satisfying
\be\label{vissing} \dot{\bar{\chi}}(t)=\frac{H(p_t^+)-H(p_t^-)}{p_t^+ -p_t^-}=
H'(p_t^-),\quad 0\leq t \leq \epsilon.\ee

By Theorem \ref{dim1}, the limit $\bar{R}_0^tv_c(x)$ is the viscosity solution,
hence (\ref{vissing}) describes the propagation of singularities of the viscosity solution.

\vspace{6pt}

%Figure \ref{ct} describes the shock of viscosity solution as well as the
%characteristics. 
We see that those characteristics which are not contained in the original big front $\cup_{t>t_c}\{t\}\times\F1^t(v)$ are tangent to the shock $\bar{\chi}(t)$, explained by (\ref{vissing}).This kind of shock
is called a {\it contact shock}. The contact shock is a typical phenomenon
due to rarefaction.
With the
limiting process of iterated minmax, we can see how arise the
``exterior'' characteristics: they are indeed emerging from
the singularities of the minmax iterated step by step.

\vspace{6pt}

%\begin{figure}[H]
%\centering
%\includegraphics[width=4.5in]{shock1.pdf}
%\caption{}\label{ct}
%\end{figure}

In their paper \cite{Bif}, S.Izumiya and G.T.Kossioris have studied
the bifurcation of shock waves for the $(HJ)$ equations in the neighborhood of the
critical time $t_c$ for generic Hamiltonians and initial functions.
By using the method of Legendrian unfoldings, they classified
the local pictures of the bifurcations of the viscosity solution
into eight generic cases.
\vspace{6pt}

Here we will not repeat the description of the generic cases but take several concrete examples to explain the limit process of iterated minmax and hence the singularities of
viscosity solutions after the critical time where there are violations of entropy condition for the genuine branches.

\vspace{6pt}

Let $\bar{x}$ be a singularity of $v$, we first figure out how the wave front looks like by
indicating the relative position of $\F1_{\{\bar{x}\}}^t=\F1|_{\{x_0=\bar{x}\}}$ in $\mathcal {F}=\F1^t(v)$. Indeed,
we get from (\ref{lx}) some key rules to characterize $\F1_{\{\bar{x}\}}^t$:
\begin{enumerate}
\item  The cusps in $\F1_{\{\bar{x}\}}^t$ correspond to the inflection points of
$H$; and the convexity of the branches of $\F1_{\{\bar{x}\}}^t$ coincide with that of $H$;

\item The tangency of $\F1_{\{\bar{x}\}}^t$ at $z(p)$ is $p$;

\item  $\F1_{\{\bar{x}\}}^t$ and the two genuine branches $u^{\pm}$
are joined in a $C^1$ smooth manner at the extremities
$z(p_{0}^\pm)$ with tangency $p_{0}^\pm$.
\end{enumerate}

\vspace{6pt}

\begin{ex}[Rarefaction]  Consider
\[v(x)=\begin{cases} -x(x-1),\quad x\leq 0\\x(x-1),\quad x\geq 0\end{cases},\quad H(p)=-p^3+p^2+p.\]
\end{ex}
Look at a neighborhood of the singularity $x=0$ of $v$.

\input{case11.TpX}

For $t>s>0$ small, the
wave fronts are depicted as follows
\begin{figure}[H]
\begin{minipage}[t]{0.5\linewidth}
\begin{flushleft} \large
\includegraphics[width=2.2in]{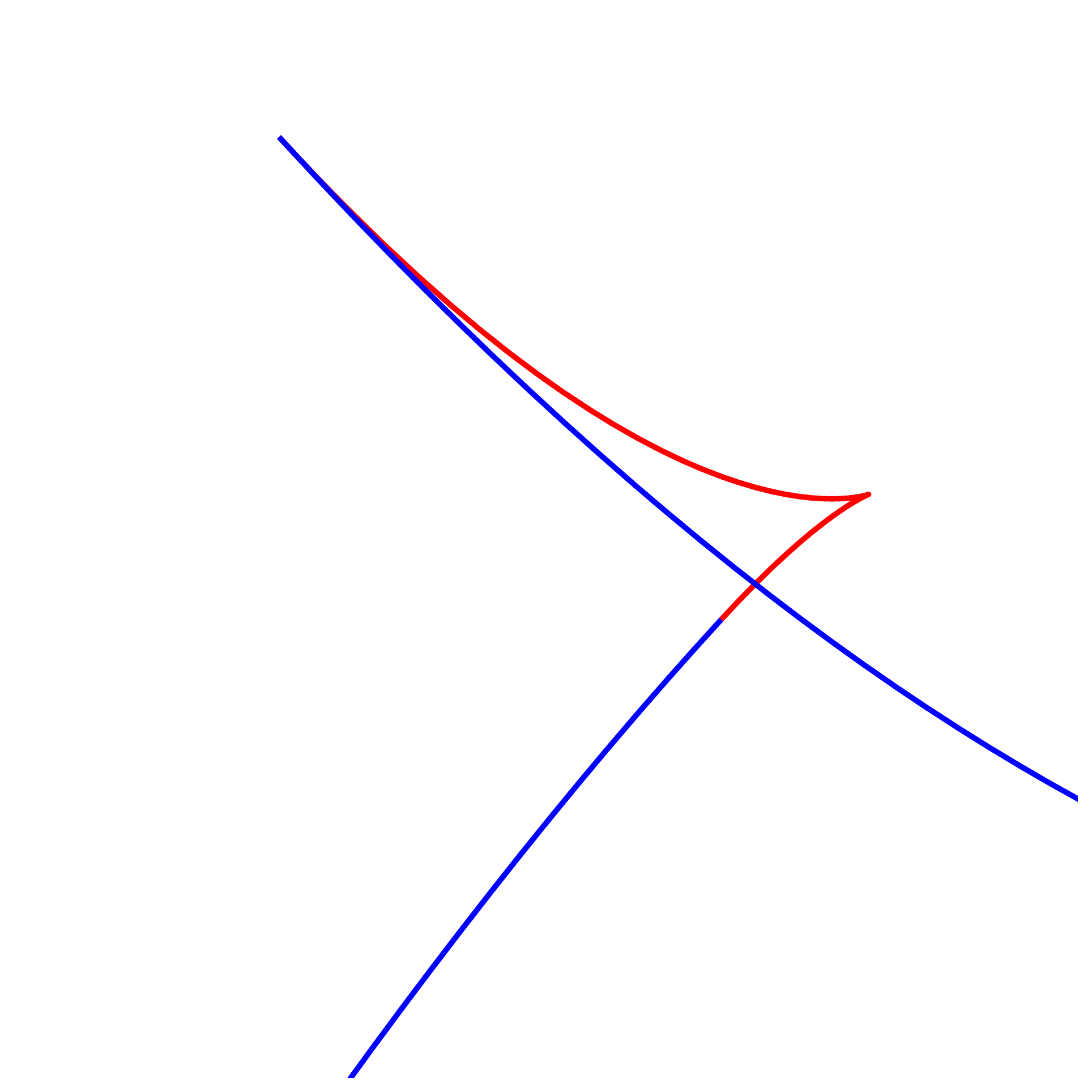}
\caption{$\mathcal{F}^{t}(v)$}\label{m1}
\end{flushleft}
\end{minipage}%
\begin{minipage}[t]{0.5\linewidth}
\begin{flushright} \large
\includegraphics[width=2.2in]{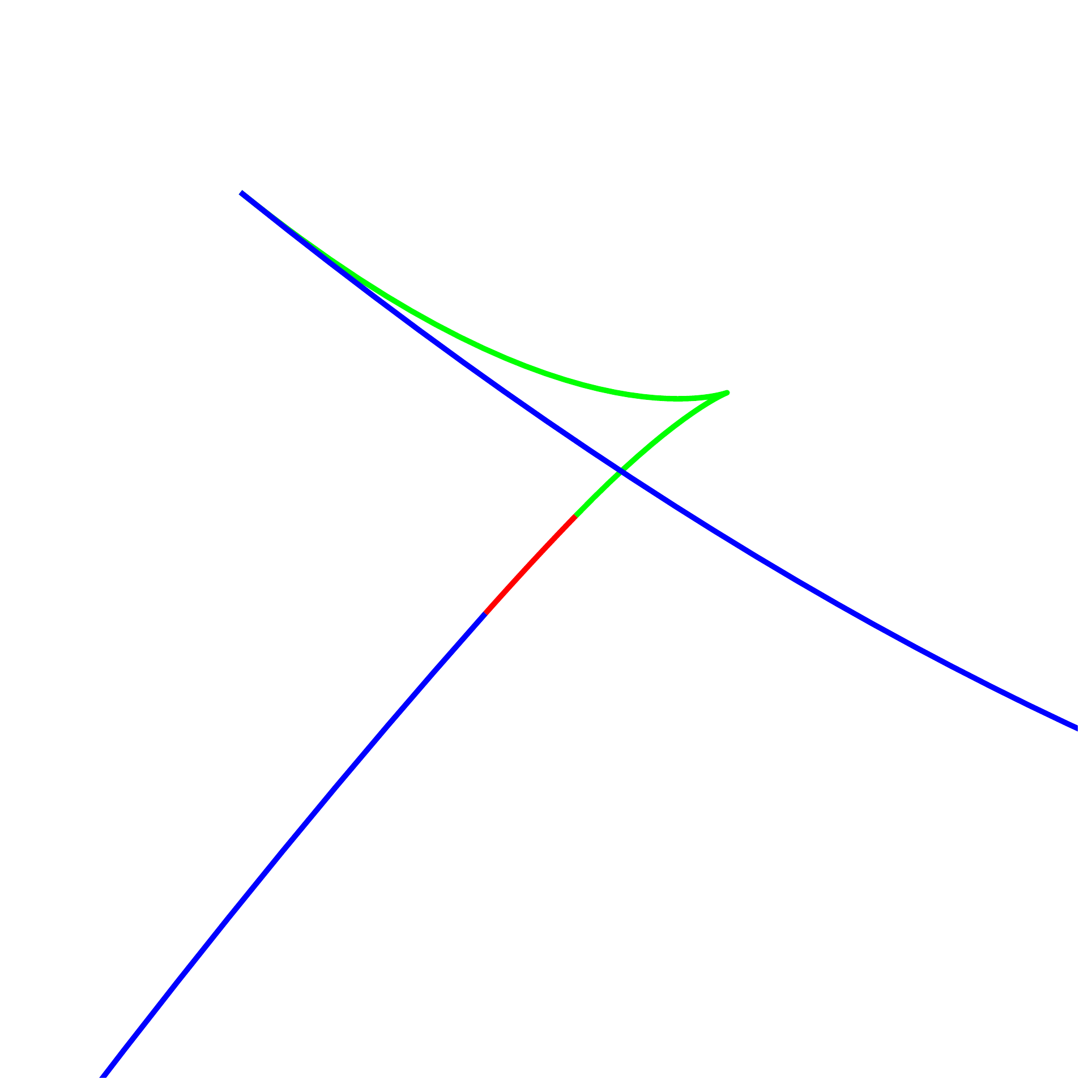}
\caption{$\mathcal{F}^{t-s}(R_0^{s}v)$}\label{m2}
\end{flushright}
\end{minipage}
\end{figure}

\begin{figure}[H]
\begin{minipage}[t]{0.5\linewidth}
\begin{flushleft} \large
\includegraphics[width=2.0in]{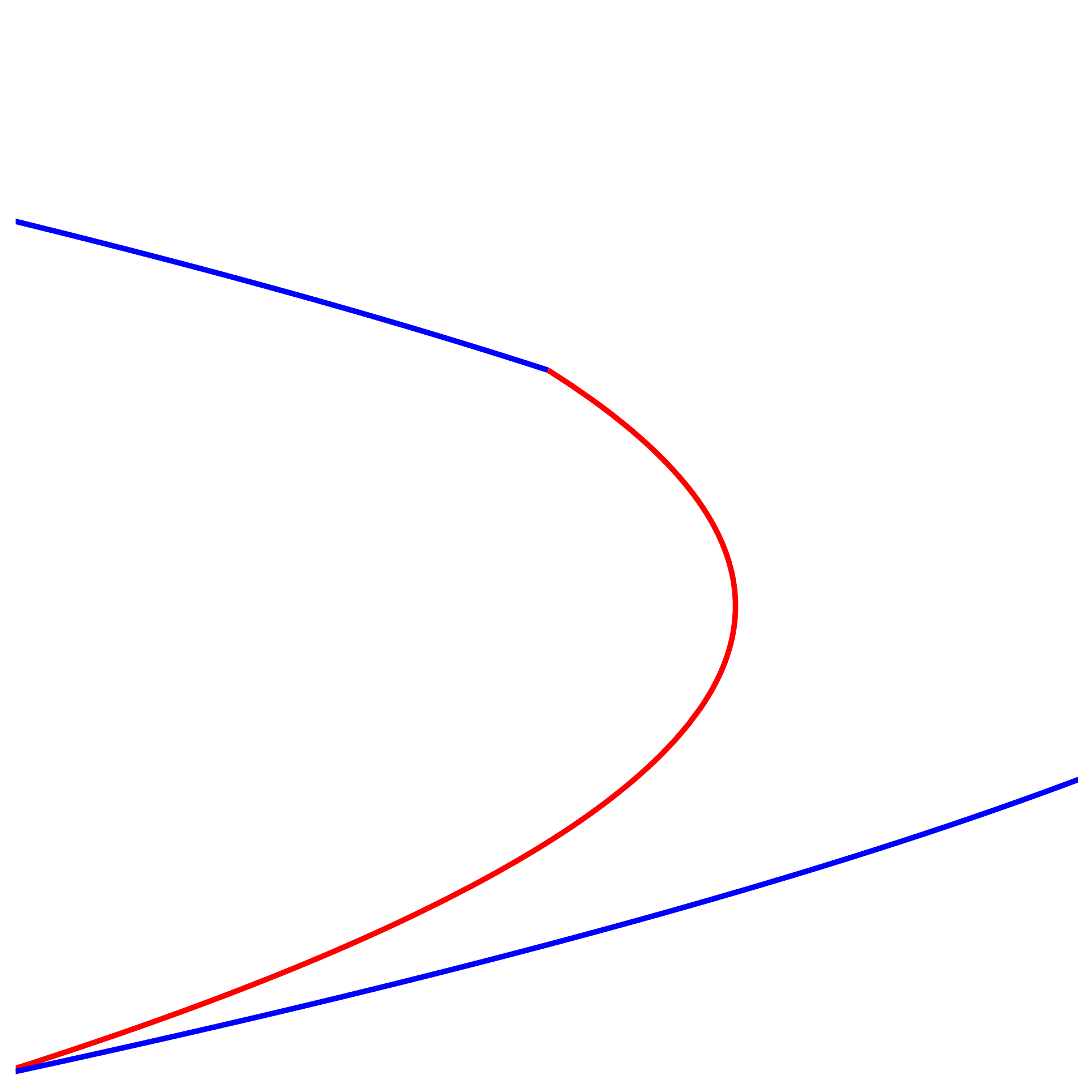}
\caption{$\vp^t(\p v)$}\label{lm1}
\end{flushleft}
\end{minipage}%
\begin{minipage}[t]{0.5\linewidth}
\begin{flushright} \large
\includegraphics[width=2.0in]{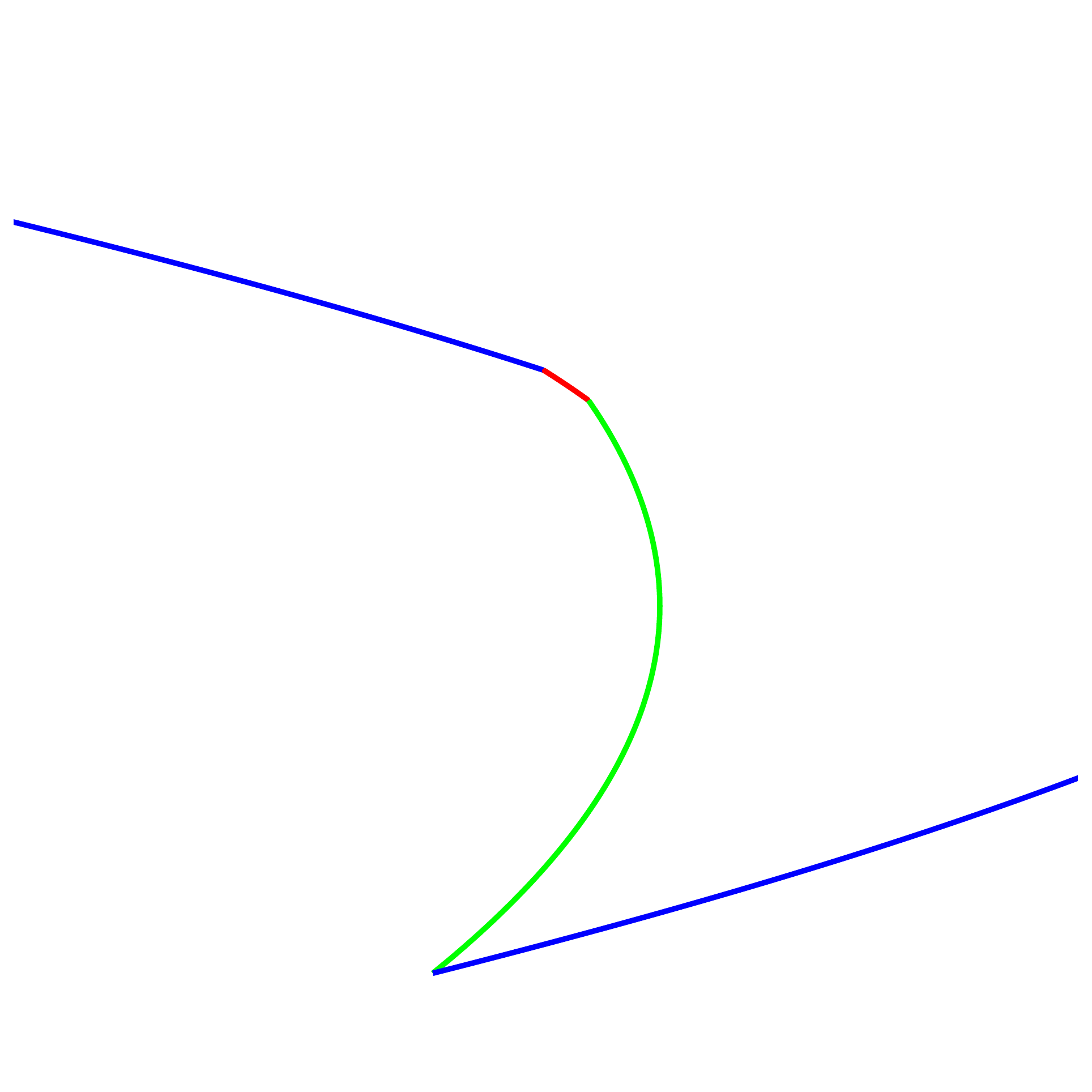}
\caption{$\vp^{t-s}(\p R_0^sv)$}\label{lm2}
\end{flushright}
\end{minipage}
\end{figure}

In the wave front $\F1^t(v)$, the two branches in blue are genuine branches, and the curve in red is $\F1_{\{0\}}^t(v)$.
There is rarefaction, that is, separation of genuine branches. The 1-step minmax
$R_0^tv(x)$ attains the minimum in the wave front, and the shock $\chi_1(t)$ of $R_0^tv(x)$ is given by the intersection
of the right genuine branch and a branch of $\F1_{\{0\}}^t(v)$. Revealed by the difference of $\vp^t(\p v)$ and $\vp^{t-s}(\p R_0^sv)$, the shock $\chi_1(t)$ violates the entropy condition for $t$ small, hence,
as depicted in the wave front $\F1^{t-s}(R_0^sv)$, there is rarefaction for the genuine branches of $R_0^sv$ and the 2-step minmax
$R_s^t\circ R_0^sv(x)$ has a shock $\chi_2(t)$ which, for $t>s$, is given by the intersection of
the right genuine branch of $R_0^sv(x)$ and a section of $\F1_{\{\chi_2(s)\}}^{t-s}(R_0^sv)$.
By the limiting process of iterated minmax, we then get a contact shock as described in the left of Figure \ref{shocktwo}.

\begin{ex} Consider
\[v(x)=\begin{cases} x(x-1),\quad x\geq 0\\x(x+1),\quad x\leq 0\end{cases}
\quad H(p)=p^4-p^2.\]
\end{ex}
Look at a neighborhood of the singularity $x=0$ of $v$:
\vspace{20pt}
\input{case22.TpX}

For $t,s>0$ small, the
wave fronts are depicted as follows
\begin{figure}[H]
\begin{minipage}[t]{0.5\linewidth}
\centering
\includegraphics[width=1.6in]{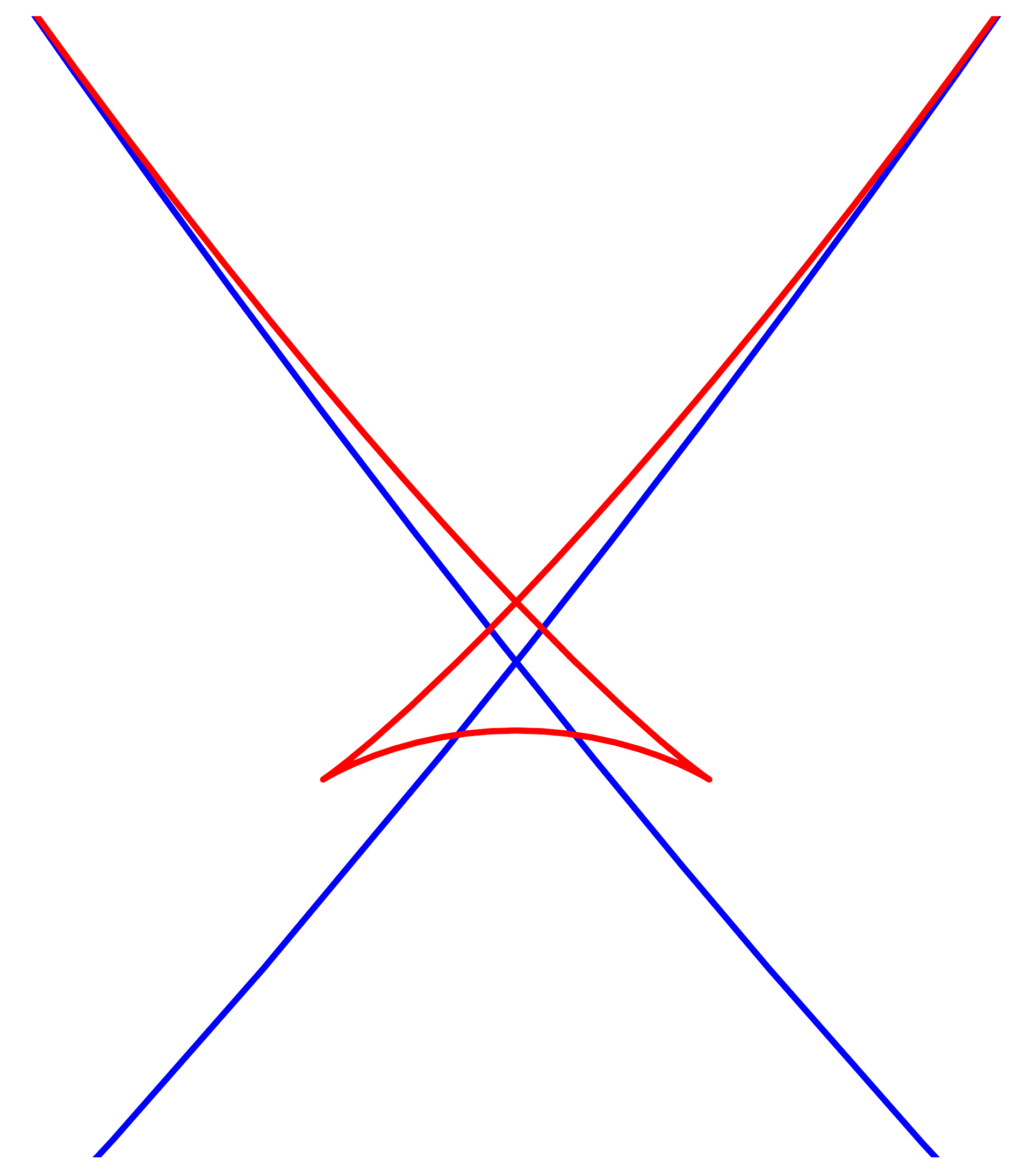}\label{mm2}
\caption{$\F1^t(v)$}
\end{minipage}%
\begin{minipage}[t]{0.5\linewidth}
\centering
\includegraphics[width=2.0in]{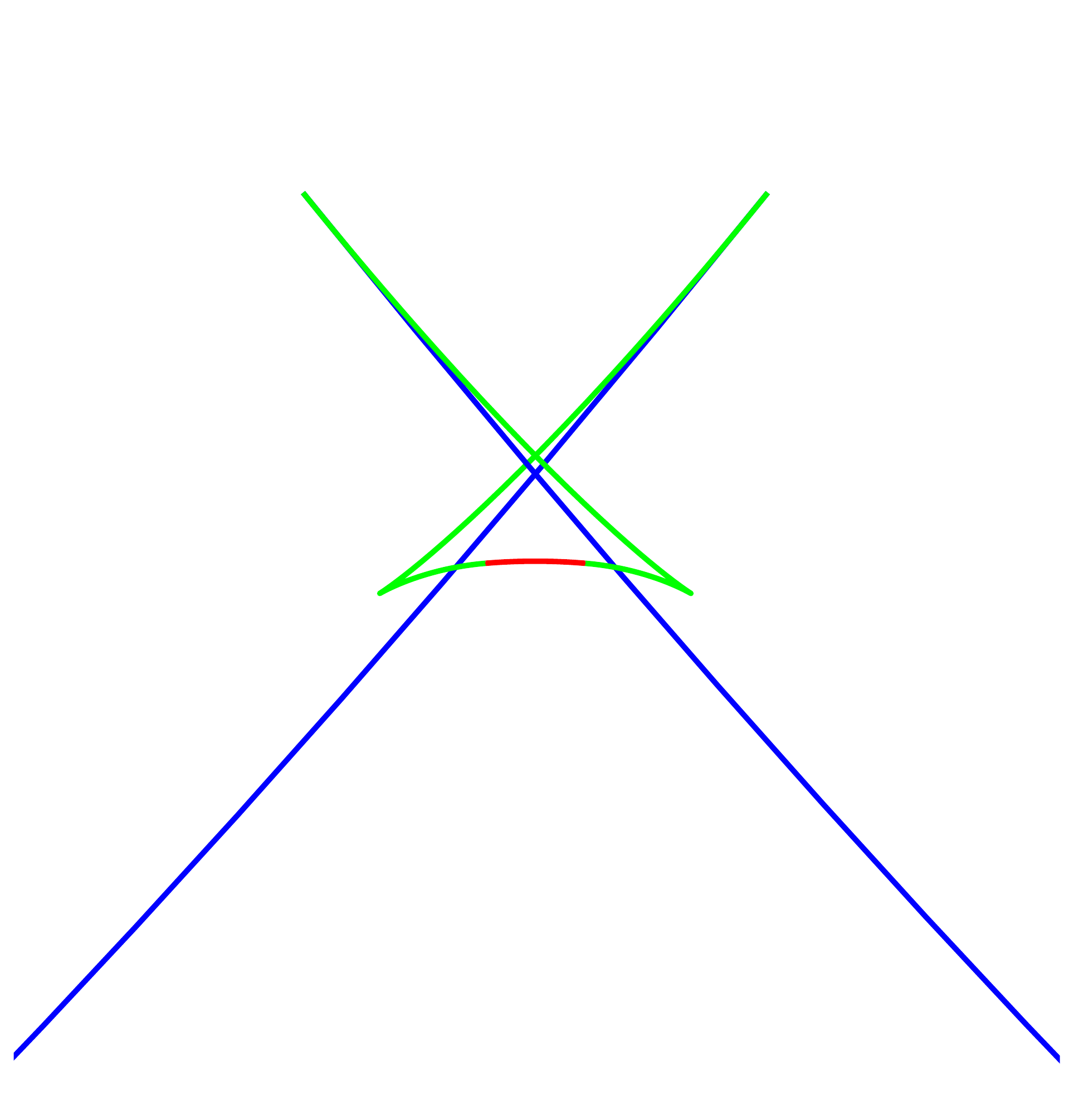}
\caption{$\F1^{t-s}(R_0^{s}v)$}\label{mm22}
\end{minipage}
\end{figure}

\begin{figure}[H]
\begin{minipage}[t]{0.5\linewidth}
\centering
\includegraphics[width=2.0in]{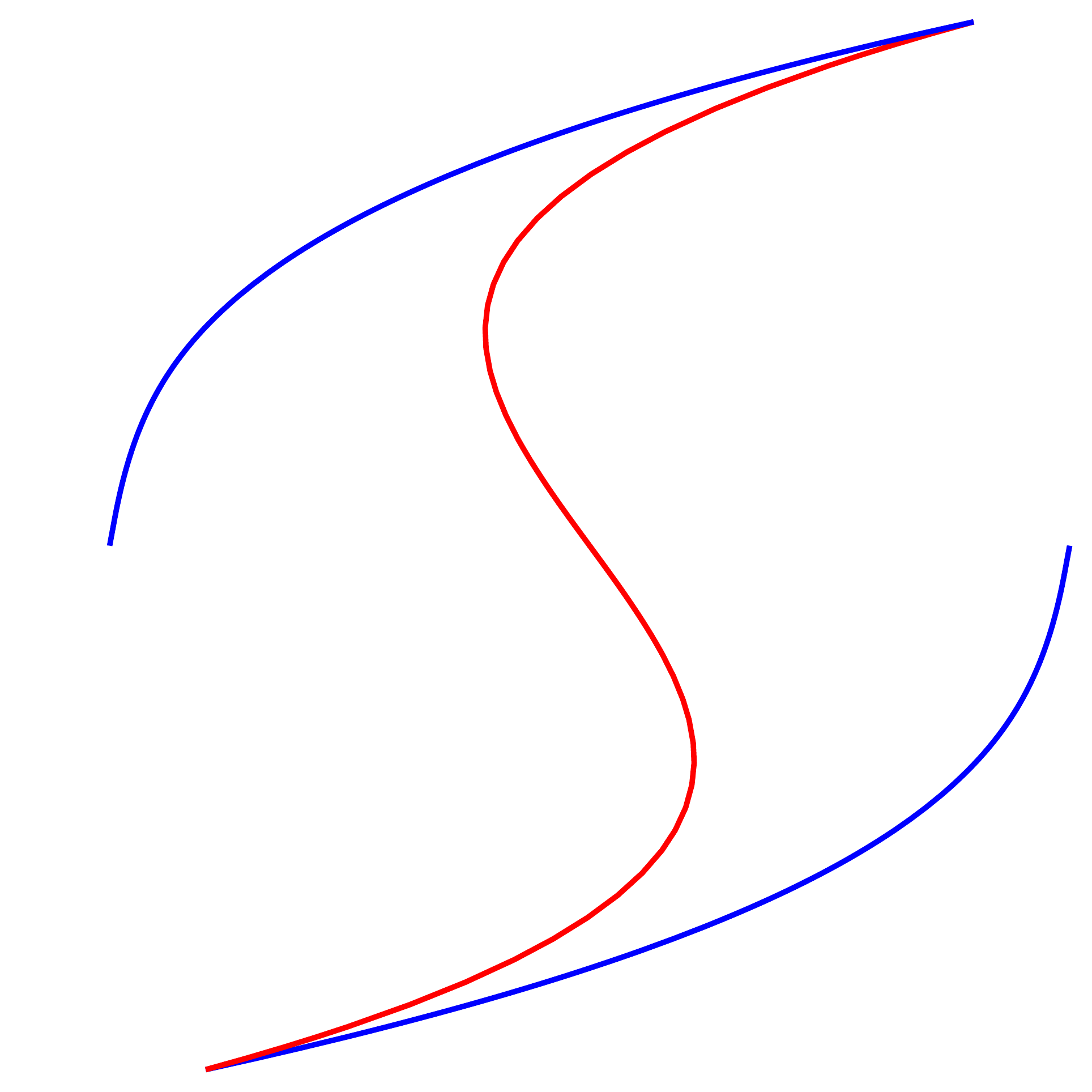}\label{lmm2}
\caption{$\vp^t(\p v)$}
\end{minipage}%
\begin{minipage}[t]{0.5\linewidth}
\centering
\includegraphics[width=2.0in]{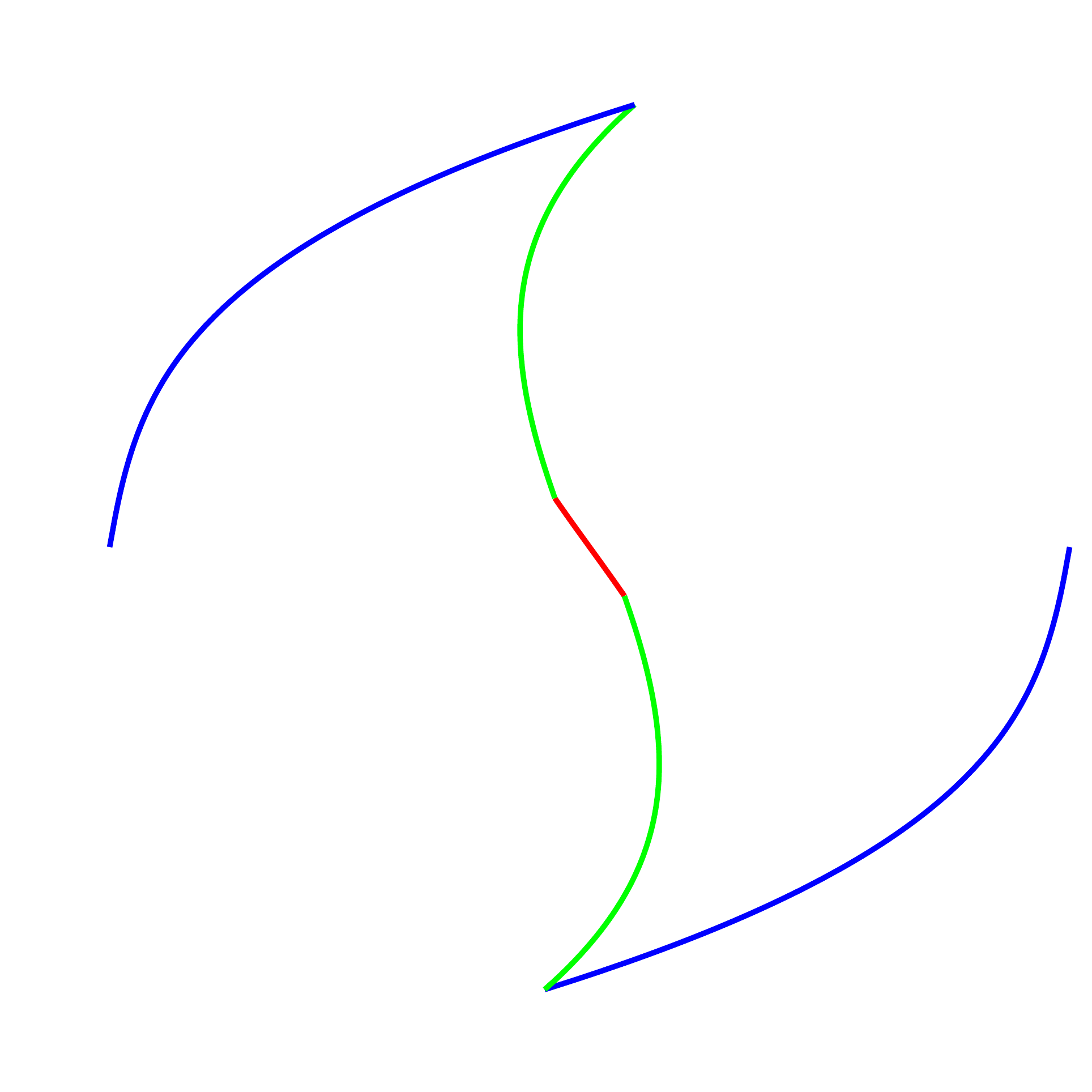}
\caption{$\vp^{t-s}(\p R_0^{s}v)$}\label{lmm22}
\end{minipage}
\end{figure}
As depicted in the wave front $\F1^t(v)$, the 1-step minmax $R_0^tv(x)$ has two shocks
$\chi_1^l(t)$ and $\chi_1^{r}(t)$. Both violate the entropy condition, hence the 2-step minmax
$R_s^t\circ R_0^sv(x)$ has new shocks $\chi_2^l(t)$ and $\chi_2^r(t)$ for $t>s$. Applying the same argument as before, by the limiting process
of iterated minmax, we get that the viscosity solution has two contact shocks. See the right of Figure \ref{shocktwo}.

\input{shocktwo.TpX}

 \bibliographystyle{alpha}
 \bibliography{bibliothese} 

\begin{thebibliography}{WEI13b}

\bibitem[CEL84]{GL}
M.~G. Crandall, L.~C. Evans, and P.-L. Lions.
\newblock Some properties of viscosity solutions of {H}amilton-{J}acobi
  equations.
\newblock {\em Trans. Amer. Math. Soc.}, 282(2):487--502, 1984.

\bibitem[Daf72]{DA}
Constantine~M. Dafermos.
\newblock Polygonal approximations of solutions of the initial value problem
  for a conservation law.
\newblock {\em J. Math. Anal. Appl.}, 38:33--41, 1972.

\bibitem[Eli87]{Elia}
Ya.~M. Eliashberg.
\newblock A theorem on the structure of wave fronts and its application in
  symplectic topology.
\newblock {\em Funktsional. Anal. i Prilozhen.}, 21(3):65--72, 96, 1987.

\bibitem[Hop65]{Hopf}
Eberhard Hopf.
\newblock Generalized solutions of non-linear equations of first order.
\newblock {\em J. Math. Mech.}, 14:951--973, 1965.

\bibitem[HR11]{HL}
Helge Holden and Nils~Henrik Risebro.
\newblock {\em Front tracking for hyperbolic conservation laws}, volume 152 of
  {\em Applied Mathematical Sciences}.
\newblock Springer, New York, 2011.
\newblock First softcover corrected printing of the 2002 original.

\bibitem[IK96]{Bif}
Shyuichi Izumiya and Georgios~T. Kossioris.
\newblock Formation of singularities for viscosity solutions of
  {H}amilton-{J}acobi equations.
\newblock In {\em Singularities and differential equations ({W}arsaw, 1993)},
  volume~33 of {\em Banach Center Publ.}, pages 127--148. Polish Acad. Sci.
  Inst. Math., Warsaw, 1996.

\bibitem[Lio82]{PL}
Pierre-Louis Lions.
\newblock {\em Generalized solutions of {H}amilton-{J}acobi equations},
  volume~69 of {\em Research Notes in Mathematics}.
\newblock Pitman (Advanced Publishing Program), Boston, Mass., 1982.

\bibitem[LR86]{HP}
P.-L. Lions and J.-C. Rochet.
\newblock Hopf formula and multitime {H}amilton-{J}acobi equations.
\newblock {\em Proc. Amer. Math. Soc.}, 96(1):79--84, 1986.

\bibitem[WEI11]{WQ}
Qiaoling WEI.
\newblock Subtleties of the minmax selector.
\newblock arxiv.org/abs/1112.5272, 2011.

\bibitem[WEI13a]{these}
Qiaoling WEI.
\newblock {\em Solutions de viscosit\'e des \'equations de Hamilton-Jacobi et
  minmax it\'er\'es}.
\newblock PhD thesis, Universit\'{e} de Paris 7, 2013.

\bibitem[WEI13b]{WQ2}
Qiaoling WEI.
\newblock Viscosity solution of {H}amilton-{J}acobi equation by a limiting
  minmax method.
\newblock 2013.

\end{thebibliography}
\end{document}